\documentclass[a4paper,12pt, onesided]{amsart}

\usepackage[latin1]{inputenc}
\usepackage{amsfonts,amsmath,amssymb,amsthm}
\usepackage[english]{babel}
\usepackage{color}
\usepackage[margin = 3cm, marginparwidth = 2.4cm]{geometry}
\usepackage{pinlabel}

\newtheorem{thm}{Theorem}[section]
\newtheorem{lemma}[thm]{Lemma}

\newtheorem{cor}[thm]{Corollary}
\newtheorem{defn}[thm]{Definition}

\theoremstyle{remark}
\newtheorem{ex}[thm]{Example}

\numberwithin{figure}{section}
\numberwithin{equation}{section}

\newcommand{\Z}{\mathbb{Z}}
\newcommand{\F}{\mathbb{F}}

\newcommand{\Q}{\mathbb{Q}}
\renewcommand{\P}{\mathbb{P}}
\newcommand{\cP}{\mathcal{P}}
\newcommand{\CP}{\mathbb{CP}\vphantom{\mathbb{P}}^2}

\renewcommand{\L}{\mathcal{L}}

\newcommand{\A}{\mathcal{A}}

\newcommand{\del}{\partial}
\newcommand{\co}{\colon}
\newcommand{\fs}{\mathfrak{s}}
\newcommand{\ft}{\mathfrak{t}}

\title{Topological line arrangements with high multiplicities}
\author{Paolo Aceto}
\address{Laboratoire Paul Painlev\'e, Universit\'e de Lille, France}
\email{paolo.aceto@univ-lille.fr}

\author{Marco Golla}
\address{CNRS and Laboratoire de Math{\'e}matiques Jean Leray, Nantes Universit\'e, France}
\email{marco.golla@univ-nantes.fr}

\date{}

\begin{document}

\begin{abstract}
We investigate constraints on the existence of topological and smooth realisations of combinatorial line arrangements and $(n_k)$-configurations in the complex projective plane $\mathbb{CP}^2$. By replacing complex lines with locally-flatly or smoothly embedded 2-spheres, we explore the extent to which classical geometric results, such as Hirzebruch's inequality, persist in the topological or smooth category.

   We introduce two classes of special line arrangements that we call \emph{odd} and \emph{even}. 
   We provide constraints for any smoothly realised, non-trivial, \emph{odd} arrangement via Furuta's 10/8-Theorem.  By looking at branched double covers and using the $G$-signature theorem, we study topologically realised, non-trivial, \emph{even} arrangement. Finally, we establish a new lower bound for $(n_k)$-configurations, showing that any topologically realised configuration must satisfy $n \geq k^2-5$, which implies the non-existence of topological realisations for finite projective planes. 
\end{abstract}

\maketitle

\section{Introduction}
A classical topic in geometry is the study of line arrangements and configurations (viewed as combinatorial objects) and their geometric realisations via points and lines in the real or complex projective plane. In this paper we investigate topological realisations of arrangements and configurations with the general goal of understanding how current techniques in 4-manifold topology compare with more classical geometric results.

A \emph{line arrangement} is given by a pair of finite sets $(\mathcal{L},\mathcal{P})$ of lines and points and an incidence relation 
$I\subset\mathcal{L}\times\mathcal P$ such that for any pair of lines $\ell,\ell'\in\mathcal{L}$ there exists a unique point $p\in\mathcal{P}$ with $(\ell,p)=(\ell',p)\in I$. When $(\ell,p)\in I$ we say that $\ell$ is \emph{incident} to $p$ or that $p$ \emph{lies} on $\ell$. The \emph{multiplicity} of a point is the number of lines that are incident to it. For any $k\geq 2$ we denote by $t_k(\A)$ the number of points of multiplicity $k$ of the line arrangement $\A$. We will omit $\A$ from the notation when there is no risk of confusion.

A \emph{pencil} of $d$ lines is a line arrangement with $t_d=1$ (and therefore $t_m=0$ when $m\neq d$).
An \emph{almost-pencil} of $d$ lines is a line arrangement with $t_{d-1}=1$, and $t_2 = d-1$.
We will say that a line arrangement is \emph{non-trivial} if it is neither a pencil nor an almost-pencil.

A \emph{geometric realisation} of a line arrangement $\A$ is a collection of lines in $\mathbb{CP}^2$ which realises the incidence relations of $\A$. One way to study geometric realisations is by looking at constraints on the $t_m$'s. A classical result of Hirzebruch \cite{Hirzebruch:1983} states that, for any geometric realisation of a non-trivial line arrangement with $d$ lines one has
\[
t_2+t_3\geq d+\sum_{m\geq 4}(m-4)t_m.
\]
In particular, any such realisation has at least one double or triple point. Hirzebruch's result is obtained by studying suitable branched covers of $\CP$ and using the BMY inequality, and therefore its proof does not have an immediate topological meaning.

By replacing lines with locally-flatly (respectively, smoothly),
pairwise transversely,
embedded 2-spheres in $\mathbb{CP}^2$ we obtain the notion of topological (resp. smooth) realisation of a line arrangement.
In particular, each 2-sphere is necessarily homologous to a complex line.

It is natural to ask if in the more flexible topological setting
one can have more degenerate realisations. For instance: is it still true that one always has at least one double or triple point? In this generality, the problem seems to be out of reach. However, constraints analogous to Hirzebruch's inequality can be obtained in particular cases. We call a line arrangement \emph{even} if it contains an even number of lines and $t_k=0$ whenever $k$ is odd.  An arrangement is \emph{odd} if $t_k=0$ whenever $k$ is even.

\begin{thm}\label{thm:oddbound}
    For any smoothly realised, non-trivial, odd line arrangement  we have
    \[
    t_3+t_5+t_7>0.
    \]
\end{thm}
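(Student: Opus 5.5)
The plan is to argue by contradiction. Suppose a non-trivial odd arrangement $\A$ of $d$ lines is smoothly realised and $t_3=t_5=t_7=0$, so every point of $\A$ has odd multiplicity $m_p\geq 9$. The first step is a parity observation. For each line $\ell$ we have $d-1=\sum_{p\in\ell}(m_p-1)$, and every summand is even, so $d$ is odd. We also record the combinatorial identity $\binom{d}{2}=\sum_p\binom{m_p}{2}$. Together with $m_p\geq 9$, this bounds the number $N$ of points: $N\leq d(d-1)/72$. Each line carries $k_\ell\leq (d-1)/8$ points.

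Next we blow up $\CP$ smoothly at the $N$ points to get $X=\CP\#N\overline{\CP}$. We assume the spheres can be isotoped near each point to look like a pencil of complex lines through it, which is where smoothness is used. The proper transforms $\tilde\ell$ are then pairwise disjoint smooth spheres. Their classes are $h-\sum_{p\in\ell}e_p$, with self-intersection $1-k_\ell$. The total class
\[
D=\sum_\ell[\tilde\ell]=d\,h-\sum_p m_p e_p
\]
has all coefficients odd, so $D$ is characteristic in $X$. Consequently the complement $Z=X\setminus\nu(\bigcup\tilde\ell)$ is a spin $4$-manifold. Since $b_2^+(X)=1$, the span of the $[\tilde\ell]$ is negative definite of rank $d$, and the orthogonal complement has rank $r=N+1-d$ and signature $2-r$. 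Its boundary is a disjoint union of lens spaces $L(k_\ell-1,1)$.

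The third step produces a closed spin manifold (or a spin manifold with controlled boundary) to which Furuta's $10/8$-theorem applies. The natural first attempt is to cap each boundary lens space with a spin filling. Because $-L(n,1)$ bounds the linear plumbing of $n-1$ spheres of self-intersection $-2$, and this plumbing is spin, a suitably oriented filling exists. Alternatively, we could use the relative $10/8$-type inequalities of Manolescu or Donald--Vafaee for spin manifolds bounding these lens spaces, after computing their $\kappa$/correction-type invariants. We then compute $b_2$ and $\sigma$ of the result in terms of $d$, $N$ and the $k_\ell$, and feed them into $b_2\geq \tfrac{10}{8}|\sigma|+2$ (when $b_2\neq0$). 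The goal is to show that the large multiplicities force $N$ and the $k_\ell$ to be so small compared with $d$ that $|\sigma|$ exceeds what the inequality allows. Non-triviality enters here: it excludes the pencil and almost-pencil, in which a single point absorbs all the incidences and the estimate degenerates.

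I expect the capping-off step to be the main obstacle. The signs must be arranged so that the glued-in pieces increase $|\sigma|$ faster than $b_2$, rather than the reverse. A naive cap with $-2$-chains adds roughly $\sum_\ell(k_\ell-2)$ to $b_2$ and could destroy the inequality. If this happens, I would instead cap with spin rational balls where available, or switch to the boundary version of $10/8$ so that no cap is needed. The combinatorial estimate $N\leq d(d-1)/72$, coming from $m_p\geq 9$, is exactly what should make the final numerical contradiction go through, which explains why the threshold in the theorem stops at $7$.
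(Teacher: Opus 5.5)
Your outline matches the paper's: blow up, take the characteristic proper transform, use the spin complement $Z$, cap the lens spaces with $2$-chains, and apply Furuta. But the proposal stops before the step that proves anything, and your expectation about that step is backwards. Since $\partial Z=-\bigsqcup_\ell L_\ell$ with $L_\ell=\partial N_\ell=L(s_\ell,1)$ and $s_\ell=k_\ell-1$, the correctly oriented version of your $-2$-chain is the plumbing $P_{s_\ell}$ of $s_\ell-1$ spheres of square $+2$. This cap is \emph{positive} definite. So the caps add $\sum_\ell(k_\ell-2)=f_1-2d$ to $\sigma$ as well as to $b_2$, and since $\frac{10}{8}>1$, equal increments of $b_2$ and $\sigma$ are exactly what make Furuta's inequality fail; nothing needs to grow faster than $b_2$. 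Concretely, set $f_0=N$ and $f_1=\sum_p m_p=\sum_\ell k_\ell$. The closed, simply connected manifold $X=Z\cup\bigsqcup_\ell P_{s_\ell}$ then has $b_2(X)=1-3d+f_0+f_1$ and $\sigma(X)=1-d-f_0+f_1$. The inequality $5\sigma(X)\le 4b_2(X)$ becomes $\sum_m(m-9)t_m\le -1-7d$ (Theorem~\ref{t:odd}), which $m_p\ge 9$ contradicts immediately. The threshold comes from the coefficient $9=5+4$ of $f_0$ in this inequality. The bounds $N\le d(d-1)/72$ and $k_\ell\le (d-1)/8$ play no role, and the fallbacks (rational balls, relative $10/8$) are unnecessary.

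The second, more substantive gap is that you never check that $X$ is spin. Knowing that $Z$ and each $P_{s_\ell}$ are spin is not enough. For $s_\ell$ even, $L_\ell$ has two spin structures, one extending over $N_\ell$ and the other over $P_{s_\ell}$, and you need a spin structure on $Z$ restricting to the latter on \emph{every} $L_\ell$. Here is one way to get it:
\begin{itemize}
\item Tube the $S_\ell$ along arcs into a connected characteristic sphere $S$. The spin structure on $Y\setminus S$ does not extend over any meridian disc of $S$; otherwise it would extend over $\nu S$, and $Y$ is not spin.
\item Restrict it to the complement of the $\nu S_\ell$ and of the arcs. Then extend it across the arcs, which are $3$-handles, to a spin structure on $Z$.
\item A meridian disc of $S_\ell$ away from the tubes is a meridian disc of $S$. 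So this spin structure extends over no $N_\ell$, and hence over every $P_{s_\ell}$.
\end{itemize}
The paper also starts from $S$, but then appeals to the Ozsv\'ath--Szab\'o inequality on $W=U\setminus N$. As written, that inequality cannot decide the question. The good spin structure has the \emph{smaller} $d$-invariant ($-\frac{s_\ell-1}{4}$ against $\frac{1}{4}$), so the resulting lower bound on $\sum_\ell d(L_\ell,\ft_\ell)$ holds for every choice of the $\ft_\ell$. The meridian-disc argument is what actually closes this step.
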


This result is a consequence of a more general inequality in the style of Hirzebruch's results (Theorem~\ref{t:odd} below). We consider the 4-manifold obtained by blowing up all intersection points of the configuration. We then remove tubular neighbourhoods of the proper transforms of all the lines and glue back in suitable positive-definite linear plumbings of spheres. This results in a closed, spin 4-manifold to which we apply Furuta's 10/8-Theorem. The use of Furuta's result explains why we have to work in the smooth category. Along the way, a careful analysis of the spin and spin$^c$ structures involved in the construction is required. To do this we make use of correction terms from Heegaard Floer homology.

\begin{thm}\label{thm:evenbound}
    For any topologically realised, non-trivial, even line arrangement we have
    \[
    t_2+t_4>0.
    \]
\end{thm}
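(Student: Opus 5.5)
We argue by contradiction: suppose a non-trivial even arrangement of $d=2n$ spheres $S_1,\dots,S_d$ is topologically realised with $t_2=t_4=0$, so every multiple point has multiplicity $\equiv 2 \pmod 4$ with $m\ge 6$. The union $C=\bigcup S_i$ represents $d\,[\CP^1]$, which is even, so there is a double cover $\pi\co X\to\CP$ branched over $C$. To make the cover a topological manifold we first blow up every intersection point. At a point of even multiplicity $m$ the exceptional sphere $E_p$ meets the proper transform of the total branch locus with even total degree. Hence the branch locus upstairs is the union $\tilde C$ of the proper transforms $\tilde S_i$, which are disjoint and have self-intersection $1-(\#\text{points on } S_i)$. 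The class $[\tilde C]=d\,h-\sum_p m_p e_p$ is divisible by $2$, because $d$ and all $m_p$ are even. So the double cover $X$ of $\CP\#N\overline{\CP}{}^2$ ($N=\sum_m t_m$) branched over the disjoint locally flat spheres $\tilde S_i$ exists as a closed topological 4-manifold with an involution $\tau$.

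Next we compute invariants of $X$. Riemann--Hurwitz gives
\[
\chi(X)=2\chi(\CP\#N\overline{\CP}{}^2)-\chi(\tilde C)=2(3+N)-2d .
\]
The $G$-signature theorem (valid topologically for locally flat branch sets, e.g.\ via the Atiyah--Singer/Wall or Viro formula) gives
\[
\sigma(X)=2\sigma(\text{base})-\tfrac12 [\tilde C]^2=2(1-N)-\tfrac12\Big(d^2-\sum_p m_p^2\Big).
\]
Since $\tilde C$ consists of spheres, $b_1(X)=0$ (the base is simply connected and the branch set nonempty), so $b_2(X)=\chi(X)-2$. We then split $H_2(X;\Q)$ into $\pm1$ eigenspaces of $\tau$. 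The $+1$ eigenspace is $H_2(\text{base})$ with form scaled by $2$, contributing $(1,N)$ to $(b^+,b^-)$. The $-1$ eigenspace therefore has signature $\sigma^-=\sigma(X)-(1-N)$ and rank $b_2(X)-(N+1)$.

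The inequality comes from requiring $|\sigma^-|\le \operatorname{rank}^-$, or equivalently $b^\pm(X)\ge b^\pm$ of the invariant part. We express everything in terms of the combinatorics using
\[
\sum_m t_m\binom m2=\binom d2,\qquad \sum_p m_p^2 = d(d-1)+\sum_m m\,t_m .
\]
The aim is an inequality of the form $\sum_m (\text{something linear in } m)\,t_m\le \text{(linear in } d)$, in which the coefficient of $t_m$ is positive for $m\ge 6$. That would contradict non-triviality, since a pencil is the only way to avoid every such point. If the untwisted count is too weak, I would instead take a $\Z/2^k$-cover, or compute the $G$-signature defect using the $-1$ eigenspace together with the fact that $\tilde S_i^2=1-r_i$, where $r_i\ge 1$ is the number of points on $S_i$.

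The main obstacle is this final numerical step: making sure the eigenspace or $b^\pm$ bound genuinely forces $t_2+t_4>0$ and does not reduce to a tautology. Almost-pencils are excluded automatically, since they have $t_2=d-1>0$. A second point needing care is justifying the $G$-signature formula for locally flat (not smooth) branch loci; for this I would cite the topological $G$-signature theorem for locally linear involutions.
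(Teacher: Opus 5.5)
Your strategy (blow up, take the double cover, apply the $G$-signature theorem, and use the invariant part of $H_2(X;\Q)$ to get $b_2^-(X)\ge N=f_0$) is the paper's. As written, however, it has two genuine gaps.

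\textbf{The claim $b_1(X)=0$ is false for your cover.} You branch over the \emph{disjoint} union $\tilde C$ of $d$ spheres. Then $H_1(Y\setminus\tilde C;\Z/2\Z)$ can have rank up to $d-1$, and simple connectivity of the base does not force $b_1(X)=0$. Two examples:
\begin{itemize}
\item For a pencil of $d$ lines, $Y$ is the Hirzebruch surface $F_1$ and the $\tilde S_i$ are fibres. So $X$ is an $S^2$-bundle over a surface of genus $d/2-1$, and $b_1(X)=d-2$.
\item For the Hesse arrangement ($d=12$, $t_4=9$, $t_2=12$) one finds $b_1(X)=2$.
\end{itemize}
With your $\chi(X)$, $\sigma(X)$ and $b_1(X)=0$, the bound $b_2^-(X)\ge N$ becomes $\sum_m(m-4)t_m\le 4-3d$. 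Both of these realisable arrangements violate it, so the argument proves too much.

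The paper avoids this by first tubing the $d$ spheres into a single sphere $S$. Then $H_1(Y\setminus S;\Z/2\Z)\cong\Z/2\Z$, and a Smith-theoretic (Goldschmidt) argument gives $b_1(X)=0$. Now $\chi(S)=2$, so $b_2(X)=2+2f_0$, and $b_2^-(X)\ge f_0$ yields $\sum_m(m-4)t_m\le d$. Alternatively, you could keep your cover and prove $b_1(X)\le\dim H_1(Y\setminus\tilde C;\F_2)-1\le d-2$ via twisted coefficients, which gives $\sum_m(m-4)t_m\le d-4$.

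\textbf{The last step, which you leave open, is not automatic.} An inequality $\sum_m c_m t_m\le(\text{linear in }d)$ with $c_m>0$ for $m\ge 6$ does not by itself exclude non-pencils. The pencil itself satisfies $\sum_m(m-4)t_m=d-4\le d$. What you need is a combinatorial lower bound on the number of singular points: the Basterfield--Kelly (de Bruijn--Erd\H{o}s) inequality $f_0\ge d$ for any arrangement that is not a pencil. With it, $t_2=t_4=0$ gives
\[
\sum_m(m-4)t_m\ge 2f_0\ge 2d>d,
\]
contradicting the bound above.
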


For this results our strategy is to blow up
all intersection points as before and then to consider a suitable double cover (relying on the fact that the arrangement is even). We use the G-signature theorem to compute the signature of this cover. Combining this computation with an estimate of the Betti numbers of the cover eventually leads to another Hirzebruch-type inequality (Theorem~\ref{t:divisible} below).

Let $n, k$ be two positive integers. An $(n_k)$-\emph{configuration} is a combinatorial line arrangement $(\cP, \L)$ with $n$ lines, such that there exist $\cP' \subset \cP$ comprising $n$ points, such that each point in $\cP'$ is contained in exactly $k$ lines, and each line in $\L$ contains exactly $k$ points in $\cP'$.

Just like we did for line arrangements, we reserve the term ``configuration'' for the combinatorial data. One can then talk about geometric, smooth, or topological realisations. A standard reference for configurations is Grunbaum's monograph~\cite{Grunbaum:2009}. The main question we are interested in is: given $k$, what is the smallest $n$ such that an $(n_k)$-configuration is realised?

It is very easy to see that for any $(n_k)$-configuration $(\cP,\L)$, we have $n \ge k^2-k+1$.
Pick a point in $\cP'$: each of the $k$ lines that pass through it contains at least $k-1$ other points of $\cP'$, so we have at least $k(k-1)+1$ points in $\cP$. Configurations with $n=k^2-k+1$
are known as \emph{finite projective planes}.
One way to construct finite projective planes  is via projectivisation of 3-dimensional vector spaces over finite fields. These will be denoted by $\mathbb{F}_q\mathbb{P}^2$, with $q$ a prime power. In \cite{Ruberman-Starkston:2016} Ruberman and Starkston have considered the problem of topological realisations of configurations. They have shown that finite projective planes of the form $\mathbb{F}_q\mathbb{P}^2$ are not topologically realisable. 

\begin{thm}
   For any topologically realised $(n_k)$-configuration we have
   \[
   n\geq k^2-5.
   \]
   In particular, no finite projective plane is topologically realisable.
\end{thm}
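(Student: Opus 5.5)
The plan is to reduce the statement to a signature-versus-Betti-number inequality for a branched double cover, in the spirit of the proof of Theorem~\ref{thm:evenbound}. Fix a topological realisation of an $(n_k)$-configuration $(\cP,\L)$ by locally flat spheres $S_1,\dots,S_n\subset\CP$. The first step is to blow up every point of $\cP$, including the points of $\cP\setminus\cP'$ where two or more lines meet outside the distinguished set. This gives $X=\CP\#N\overline{\CP}$ with $N=|\cP|$, in which the proper transforms $L_1,\dots,L_n$ are pairwise disjoint locally flat spheres. We have $[L_i]=h-\sum_{p\in L_i}e_p$ and $L_i\cdot L_i=1-|\{p\in\cP: p\in L_i\}|$. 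The configuration axioms pin down the data coming from $\cP'$: each line contains exactly $k$ points of $\cP'$, and each of these points has multiplicity $k$. Counting pairs of lines, the points outside $\cP'$ absorb exactly
\[
\binom n2-n\binom k2
\]
pairs. This is the only place where $n$ and $k$ enter, and it is exactly the quantity that becomes small, or negative, when $n$ is close to $k^2-k+1$.

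The second step is to produce a branch locus whose homology class is divisible by two. I would take $F$ to be the disjoint union of the $L_i$, possibly together with some exceptional spheres $E_p$, or with one $L_i$ dropped, chosen according to the parities of $n$ and $k$. The intersections of the added $E_p$'s with the $L_i$ are then smoothed so that $F$ is a locally flat surface with $[F]\in 2H_2(X;\Z)$. Let $\Sigma\to X$ be the double cover branched along $F$. The $G$-signature theorem gives
\[
\sigma(\Sigma)=2\sigma(X)-\tfrac12 F\cdot F,
\]
while multiplicativity of the Euler characteristic gives $\chi(\Sigma)=2\chi(X)-\chi(F)$. Since $X$ is simply connected and $F$ is non-empty, I would show, as in the even case, that $b_1(\Sigma)=0$; this is a standard transfer argument for double covers. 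Hence $b_2(\Sigma)=\chi(\Sigma)-2$.

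The third step is to plug the numbers into $|\sigma(\Sigma)|\le b_2(\Sigma)$. We have $\sigma(X)=1-N$ and $\chi(X)=3+N$. The self-intersection $F\cdot F$ is $\sum_i L_i\cdot L_i$ plus the correction from the added exceptional spheres, which is roughly $n(1-k)-\sum_{p\notin\cP'}(\text{mult}(p)-1)$. The Euler characteristic $\chi(F)$ is about $2n$. In the inequality the contributions of the points outside $\cP'$ should appear with a sign that can only help, or can be bounded using the count in the first step. What remains should be a quadratic inequality in $n$ and $k$ that fails precisely when $n<k^2-5$. In particular it should fail for finite projective planes, where $n=k^2-k+1$ and there are no points outside $\cP'$.

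The main obstacle is the parity bookkeeping in the second step together with the treatment of the points of $\cP\setminus\cP'$. The branch class must be chosen so that the resulting inequality is sharp enough to give the specific constant $-5$ in every parity case of $(n,k)$. At the same time, the uncontrolled multiplicities of the extra points must enter the final estimate only with a favourable sign. I would first try the case where $n$ and $k$ are both even, where $\sum[L_i]=nh-k\sum_{p\in\cP'}e_p-\dots$ is already nearly divisible by two. I would then treat the other cases by adding or removing a single line or exceptional sphere, checking that this changes the bound by only a bounded amount, which accounts for the constant $5$.
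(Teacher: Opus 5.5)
There is a genuine gap. The inequality you plan to use, $|\sigma(\Sigma)|\le b_2(\Sigma)$, is too weak to give $n\ge k^2-5$, however the parity bookkeeping is done.

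Its only nontrivial half is $\sigma(\Sigma)\le b_2(\Sigma)$, that is, $b_2^-(\Sigma)\ge 0$. Take an even arrangement and let $F$ be the sphere obtained by tubing the proper transforms in the blow-up at all $f_0$ singular points. Then $\sigma(\Sigma)=\frac{4-4f_0+f_1-d}{2}$ and $b_2(\Sigma)=2+2f_0$, so $b_2^-(\Sigma)\ge0$ only says $\sum_m (m-8)t_m\le d$. In the cleanest case ($n,k$ even, $t_k=n$, all other points perturbed to double points, so $t_2=\binom n2-n\binom k2$) this gives $3n\ge 3k^2-2k-6$. That is only $n\ge k^2-\frac23 k-2$, which is weaker than $k^2-5$ for all $k\ge5$. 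The loss grows linearly in $k$, so adding or deleting a single line cannot absorb it. For example, delete a line from a projective plane of order $7$ and perturb the resulting $7$-fold points. You get $\sum_m(m-8)t_m=-304\le 56=d$, so there is no contradiction.

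The missing idea is that the preimages in $\Sigma$ of the $f_0$ exceptional spheres are pairwise disjoint surfaces of square $-2$. Hence $b_2^-(\Sigma)\ge f_0$, not merely $b_2^-(\Sigma)\ge 0$. This upgrades the estimate to $\sum_m(m-4)t_m\le d$ (Theorem~\ref{t:divisible} with $q=2$); in the order-$7$ example this sum is $116>56$. For an arrangement with $t$ points of even multiplicity $\ell$ and otherwise double points, it becomes $d^2\ge(\ell^2-4)t$, and with $t=d=n$ this is $n\ge k^2-4$. The paper does not fix parity by modifying the branch locus with exceptional spheres. Instead it deletes one line, or $k-1$, or $k$ concurrent lines, perturbs the remaining odd-multiplicity points, and solves the resulting quadratic inequalities.

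There are two further problems.

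\emph{The claim $b_1(\Sigma)=0$.} This is false for a disconnected branch locus: transfer only kills the invariant part of $H_1(\Sigma;\Q)$. For instance, blow up the centre of a pencil of four lines. The proper transforms are four fibres of $\CP\#\overline{\CP}\to\mathbb{CP}^1$, and the double cover branched along them is an $S^2$-bundle over $T^2$, which has $b_1=2$. You must tube $F$ into a connected surface, as the paper does. Then $H_1(X\setminus F;\Z/2)\cong\Z/2$ and the Goldschmidt lemma applies.

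\emph{The finite projective planes.} The ``in particular'' does not follow from $n\ge k^2-5$ for orders $q\le 5$, since $k^2-k+1\ge k^2-5$ when $k\le 6$. These cases need separate input: the uniqueness of projective planes of orders $2,\dots,5$, together with Ruberman--Starkston's non-realisability of $\F_q\P^2$.
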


It is worth mentioning that in \cite{Aceto-McCoy-Park:2026} the first author, D. McCoy and JungHwan Park use different techniques to show that if an $(n_k)$-configuration is \emph{smoothly} realisable then $n\geq k^2-1$.
Note also that Hirzebruch's inequality implies that for \emph{complex} line arrangements $n \ge k^2+k-5$ (see \cite[Proposition 5.1]{Aceto-McCoy-Park:2026} for a detailed proof).

Our results together with those from \cite{Ruberman-Starkston:2016} and \cite{Aceto-McCoy-Park:2026} show that different bounds can be obtained according to whether one relies on tools from the topological or smooth category. It would be interesting to test this gap by producing explicit configurations which can be realised topologically but not smoothly. 

\subsection*{Organisation} The paper is organised as follows. In Section~\ref{s:combinatorics}, we establish the combinatorial foundations of line arrangements, fixing notation for multiplicities and reviewing essential counting lemmas. In Section~\ref{s:even},
we define the class of $q$-divisible arrangements, which generalise even arrangements, and derive constraints on their realisability by constructing branched covers and applying the $G$-signature theorem. Section~\ref{s:odd} is devoted to the study of odd arrangements; here, we employ smooth 4-manifold techniques, specifically Furuta's 10/8-Theorem and Heegaard Floer correction terms, to obtain multiplicity bounds. Finally, in Section~\ref{s:configurations}, we apply our results to $(n_k)$-configurations and finite projective planes, establishing new topological lower bounds and proving the non-realisability of finite projective planes in the topological category.

\subsection*{Acknowledgements} The second author would like to thank Adrien Rodau for a question which motivated part of this work. We would also like to thank Erwan Brugall\'e, Duncan McCoy, and JungHwan Park for many interesting discussions.

\section{Line arrangements}\label{s:combinatorics}

We recall two classical lemmas about the combinatorics of arrangements.

Let us set up some notation.
Following Hirzebruch~\cite{Hirzebruch:1983}: for $k \ge 0$, let $f_k = \sum_m m^k t_m(\L)$ so that $f_0 = \sum_m t_m$ and $f_1 = \sum_m mt_m$.

Also, recall that if we have a line arrangement which is realised in $\CP$, and we blow $\CP$ up at all singular points of the line arrangement, in the blow-up we see the \emph{proper transform} (also called \emph{strict transform}) of the arrangement, which is a collection of pairwise disjoint spheres. If we blow up at a point $p$, the corresponding exceptional divisor $E_p$, which is a smoothly embedded sphere of self-intersection $-1$, intersects transversely once the proper transform of each line of the arrangement that passes through $p$.

\begin{lemma}\label{l:f1f2}
Let $\A \subset \CP$ be a locally-flat realisation of line arrangement.
Blow up $\CP$ at all the singular points of $\A$, and call $\L$ the proper transform of $\A$.
\[
\L \cdot \L = d - f_1
\]
\end{lemma}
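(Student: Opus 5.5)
The plan is to compute the self-intersection of the class $[\L]$ in the blow-up $X = \CP \# N\overline{\CP}$, where $N = f_0$ is the number of blown-up points. We read $\L$ as the union of the proper transforms $\tilde\ell$ of the $d$ lines, and $\L\cdot\L$ as the self-intersection of the total homology class $\sum_\ell [\tilde\ell]$. Since the proper transforms are pairwise disjoint, this equals $\sum_\ell \tilde\ell\cdot\tilde\ell$. The only task is therefore to compute each $\tilde\ell\cdot\tilde\ell$ and add them up.

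First we fix a basis of $H_2(X;\Z)$: $h$ is the class of a line in $\CP$, and $e_p$ is the class of the exceptional sphere $E_p$ for each singular point $p$. These satisfy $h^2=1$, $e_p^2=-1$, and all other products vanish. Each sphere $\ell$ of a locally-flat realisation is homologous to a complex line, so $[\ell]=h$. Blowing up at a point $p$ of $\ell$ where $\ell$ is locally flat (and, since the spheres are pairwise transverse, locally modelled on a union of coordinate planes) replaces $[\ell]$ by $[\ell]-e_p$. This is the usual proper-transform formula for a surface of multiplicity one at $p$, and we check it via the local model: $\tilde\ell$ meets $E_p$ transversely once.

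It follows that $[\tilde\ell] = h - \sum_{p\in\ell} e_p$, where the sum runs over the singular points on $\ell$. Every point of the arrangement lying on $\ell$ is blown up, since each such point is an intersection with another line. Hence $\tilde\ell\cdot\tilde\ell = 1 - \#\{p \in \ell\}$. Summing over all lines,
\[
\L\cdot\L = \sum_\ell \bigl(1 - \#\{p\in\ell\}\bigr) = d - \sum_p m_p = d - f_1,
\]
because each point $p$ of multiplicity $m_p$ is counted once for each of the $m_p$ lines through it. We could instead expand $(dh - \sum_p m_p e_p)^2 = d^2 - f_2$, but that is the self-intersection of the total class including the cross terms $\tilde\ell\cdot\tilde\ell'=0$. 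These cross terms vanish by the identity $d(d-1) = \sum_p m_p(m_p-1)$, which expresses that every pair of lines meets exactly once; using it, the two computations agree.

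The only step that needs care is justifying the proper-transform formula in the locally-flat setting. Here we rely on the convention, implicit in the setup, that near each singular point the spheres are pairwise transverse and can be put in the standard local model, so that the blow-up is performed smoothly there.
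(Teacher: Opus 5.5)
Your proof is correct and essentially the paper's. The paper writes the total class as $dh - \sum m_i e_i$, squares it to get $d^2 - f_2$, and converts this using the count $\sum_m \binom{m}{2} t_m = \binom{d}{2}$. You instead square each $[\tilde\ell] = h - \sum_{p\in\ell} e_p$ separately and use that the proper transforms are disjoint, which is the same identity in another form, as you note yourself.
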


\begin{proof}
The homology class of $\L$ is
\[
dh - \sum m_i e_i,
\]
so
\[
\L \cdot \L = d^2 - \sum_m m^2t_m = d^2-f_2.
\]
However, from the combinatorics we have
\[
\sum_m {m \choose 2} t_m = {d \choose 2},
\]
which in turn implies that 
\[
d^2 - f_2 = d - f_1,
\]
as needed.
\end{proof}

We also recall a simple inequality between the degree of a non-trivial line arrangement and the number of its singular points.

\begin{lemma}[Basterfield, Kelly~\cite{Basterfield-Kelly:1968}]\label{l:largef0}
Suppose that $\L$ is a combinatorial line arrangement of degree $d$.
Then either $\L$ is a pencil or $f_0 \ge d$.
\end{lemma}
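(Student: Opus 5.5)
The plan is the classical de Bruijn--Erd\H{o}s double-counting argument. It is purely combinatorial, so the realisation plays no role. Let $d = |\L|$ and $f_0 = |\cP|$, where we may assume $\cP$ consists only of the points lying on at least two lines. Suppose $\L$ is not a pencil and, for contradiction, that $f_0 < d$.

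First, a structural observation. Since the arrangement is not a pencil, no point lies on all lines. So for every point $p$ and every line $\ell$ with $p\notin\ell$, each of the $r_p$ lines through $p$ meets $\ell$ in a point of $\ell$. These meeting points are distinct: two lines through $p$ already meet at $p$, and they meet in a unique point. Hence $r_p \le k_\ell$, where $r_p$ is the multiplicity of $p$ and $k_\ell$ is the number of points on $\ell$.

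Second, double-count the non-incident pairs $(p,\ell)$ with $p\notin \ell$, weighted so as to compare $f_0$ with $d$. We have
\[
\sum_{\ell}\sum_{p\notin\ell}\frac{1}{d(f_0-k_\ell)} = 1 = \sum_{p}\sum_{\ell\not\ni p}\frac{1}{f_0(d-r_p)}.
\]
Here $f_0 - k_\ell > 0$, since otherwise every point lies on $\ell$ and then all other lines pass through one point of $\ell$, giving a pencil. Likewise $d - r_p > 0$, because no point lies on all lines. Compare the two sums term by term over the same index set of pairs $(p,\ell)$ with $p\notin\ell$. From $f_0<d$ and $r_p\le k_\ell$ we get $f_0 r_p < d\,k_\ell$, which gives $d(f_0-k_\ell) < f_0(d-r_p)$. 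So each term on the left is strictly larger than the corresponding term on the right. The index set is nonempty, because the arrangement is not a pencil. This contradicts the equality of the two sums, so $f_0\ge d$.

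The main subtlety I expect is the degenerate bookkeeping. One must verify positivity of the denominators, and handle the case where some line contains all points. That case is exactly where the pencil, or more generally a collinear situation, must be ruled out. One also needs the index set to be nonempty so that strictness is meaningful. I would handle these first, using the pencil exclusion as above. The one remaining edge case is $d\le 2$, where two lines meeting in a point are already a pencil.
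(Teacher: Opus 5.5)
Your argument is correct and is essentially the paper's proof. Both double-count the non-incident pairs $(p,\ell)$ using the key inequality $r_p\le k_\ell$, except that you use the symmetric de Bruijn--Erd\H{o}s weights $\frac{1}{d(f_0-k_\ell)}$ and $\frac{1}{f_0(d-r_p)}$, whereas the paper first bounds $\frac{1}{f_0-r_L}\ge\frac{1}{f_0-m_p}$ and only afterwards uses $f_0<d$ to get $\frac{d-m_p}{f_0-m_p}>\frac{d}{f_0}$ (the paper's text says ``suppose that $f_0>d$'' at that step, which should read $f_0<d$ as in your version).
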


We found the beautiful, elementary proof in~\cite{Grunbaum:1972}, and we include it here for completeness.

\begin{proof}
We will argue by contradiction.

For a point $p$ in $\L$, let $m_p$ denote its multiplicity.
For a line $L$ in $\L$, denote with $r_L$ the number of singular points on it.
The key (elementary) observations are two: first, if $p \not\in L$, then $r_L \ge m_p$, and second, for each $p$ there are exactly $d-m_p$ lines of $\L$ that do not contain $p$.

Armed with these observations, we run a double-counting argument.
The sums below are taken either over $L\in\L$, or over the singular points $p$ of $\L$, or over pairs $(p,L)$ of a point $p$ and a line $L$ with $p \not\in L$.
\[
d = \sum_L 1 = \sum_L \frac{f_0-r_L}{f_0-r_L} = \sum_{L \not \ni p} \frac1{f_0-r_L} \ge \sum_{p \not\in L} \frac1{f_0-m_p} = \sum_p \frac{d-m_p}{f_0-m_p}.
\]
If we now suppse that $f_0 > d$, each summand $\frac{d-m_p}{f_0-m_p}$ in the last sum is larger than $\frac{d}{f_0}$, so we have
\[
d \ge \sum_p \frac{d-m_p}{f_0-m_p} > \sum_p \frac{d}{f_0} = d,
\]
a contradiction.
\end{proof}

\section{Divisible line arrangements}\label{s:even}

First we set up some terminology and some notation.
If $q$ is a positive integer, we say that a line arrangement $\L$ of degree $d$ is \emph{$q$-divisible} if $q$ divides $d$ and all multiplicities of singularities of $\L$.
This generalises the notion of even arrangement, as we defined in the introduction: an even arrangement is just a $2$-divisibile arrangement.

In this section we prove the following theorem and its corollaries.

\begin{thm}\label{t:divisible}
Let $\L$ be a $q$-divisible line arrangement for some $q$. Then:
\[
\sum_m (m-{\textstyle \frac{6q}{q+1}})t_m \le d.
\]
In particular, if $q=2$ we have:
\[
\sum_{m \ge 6} (m-4)t_m \le d + 2t_2.
\]
\end{thm}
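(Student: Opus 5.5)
The plan is to build a $\Z/q$-branched cover of the blown-up plane, branched along the proper transform of the arrangement. Then I compare the $G$-signature theorem with an upper bound for the non-invariant part of $H_2$.

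\textbf{Construction.} Let $Y = \CP \# f_0\overline{\CP}$ be the blow-up of $\CP$ at all singular points of the realisation, and let $F = \L$ be the proper transform: a disjoint union of $d$ locally-flat spheres. Its class is $dh - \sum_p m_p e_p$, which is divisible by $q$ because the arrangement is $q$-divisible. Hence there is a $q$-fold cyclic cover $X \to Y$ branched exactly along $F$; the exceptional spheres are not part of the branch locus. The cover is defined by the homomorphism $H_1(Y\setminus F)\to\Z/q$ sending every meridian of every component of $F$ to $1$. This is well defined since $H_1(Y\setminus F)$ is the quotient of $\Z^d$ by the relations coming from $H_2(Y)$, namely $\sum_L \mu_L$ (from $h$) and $\sum_{L\ni p}\mu_L$ (from $e_p$). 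These map to $d$ and $m_p$, which are $0$ mod $q$. In the topological category I would use the locally-flat version of branched covers and of the $G$-signature theorem, which hold since $F$ has normal bundles.

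\textbf{Numerics.} Write $G=\Z/q$. Since $\chi(F)=2d$, we have $\chi(X) = q\chi(Y) - (q-1)2d = q(3+f_0) - 2(q-1)d$. By Lemma~\ref{l:f1f2}, $F\cdot F = d - f_1$, so the $G$-signature theorem gives
\[
\sigma(X) = q\,\sigma(Y) - \frac{q^2-1}{3q}\,F\cdot F = q(1-f_0) + \frac{q^2-1}{3q}(f_1-d).
\]
The invariant part of $H_2(X;\Q)$ is $H_2(Y;\Q)$, with signature $1-f_0$ and dimension $1+f_0$. Decompose $H_2(X;\Q)$ into the invariant part and the sum $V$ of the nontrivial eigenspaces. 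Then $V$ has signature $(q-1)(1-f_0)+\frac{q^2-1}{3q}(f_1-d)$. Its dimension is at most $\chi(X)-\chi(Y)+2b_1(X) = (q-1)(3+f_0-2d)+2b_1(X)$. Bounding signature by dimension and dividing by $q-1$ gives
\[
\frac{q+1}{3q}(f_1-d) \le 2 + 2f_0 - 2d + \frac{2b_1(X)}{q-1}.
\]
If $b_1(X)=0$, this rearranges to $f_1 - \frac{6q}{q+1}f_0 \le d - \frac{6q}{q+1}(d-1)$. That is at least as strong as the claimed $\sum_m(m-\frac{6q}{q+1})t_m\le d$ once $d\ge1$. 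The $q=2$ specialisation is then immediate: $6q/(q+1)=4$, and only even multiplicities occur, so the $m=4$ term drops and the $m=2$ term contributes $-2t_2$. The slack of roughly $6q(d-1)/(q+1)$ also leaves room to absorb a modest $b_1$.

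\textbf{Main obstacle.} The hard step is controlling $b_1(X)$, ideally showing $b_1(X)=0$. First I would compute $H_1(X;\Q)$ by an Alexander-module argument. $H_1$ of the unbranched cover of $Y\setminus F$ decomposes into eigenspaces of the deck action. Since every meridian maps to a generator of $\Z/q$ and $\pi_1(Y\setminus F)$ is normally generated by meridians, filling in the branch locus should kill everything. Concretely, $\pi_1(X)$ is a quotient of the kernel of $\pi_1(Y\setminus F)\to\Z/q$ by the $q$-th powers of meridians. Each such generator can be conjugated to a product of meridian $q$-th powers and commutators with meridians, whose rational images vanish in each eigenspace. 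If this argument does not close, the fallback is to bound $b_1(X)$ by a linear expression in $d$ small enough that the $\frac{6q}{q+1}(d-1)$ slack absorbs it.
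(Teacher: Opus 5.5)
\textbf{The gap is the step $b_1(X)=0$.} It is not merely unproven: it is false for the cover you build.

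The rest of your argument is sound. Splitting off the invariant part and using $\sigma(V)\le\dim V$ is equivalent to $b_2^-(X)\ge f_0$, which is exactly the paper's input. The one real difference is that you branch over the $d$ disjoint spheres of the proper transform, whereas the paper first tubes them into a single sphere $S$.

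Your Alexander-module sketch fails for the following reason. The lifted meridians are fixed by the deck group, so filling in the branch locus only kills classes in the trivial eigenspace. Every non-trivial eigenspace of $H_1$ of the unbranched cover survives unchanged in $H_1(X;\mathbb{C})$. Moreover, a commutator $[\mu,g]$ maps to $(t-1)[g]$, which is invertible on those eigenspaces, not zero.

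A concrete failure is the Hesse arrangement: $d=12$, $t_4=9$, $t_2=12$, so $f_0=21$ and $f_1=60$. It is $2$-divisible and non-trivial, and $H_1(Y\setminus F;\Q)=0$. For $q=2$ your own displayed inequality reads $24\le 20+2b_1(X)$, which forces $b_1(X)\ge 2$. Geometrically, $X$ maps onto the elliptic curve that double covers the base of the Hesse pencil. So the improvement by $\frac{6q}{q+1}(d-1)$ that you claim is not a theorem.

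\textbf{What the fallback must supply.} You need the bound $b_1(X)\le (q-1)(d-1)$. It uses up all of the slack and gives back precisely $\frac{q+1}{3q}(f_1-d)\le 2f_0$, i.e.\ the stated inequality and nothing more.

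The paper gets this by tubing first. Then $H_1(Y\setminus S;\Z/q)\cong\Z/q$ (computed via $H^3(Y,S)$ as in Lemma~\ref{l:divisiblecover}). For $q$ a prime power, the Goldschmidt lemma gives $b_1=0$, hence $b_2=q+qf_0$, and $b_2^-\ge f_0$ finishes the proof.

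Your cover differs from the paper's by $(q-1)(d-1)$ surgeries on $2$-spheres, replacing the preimages of the tubes by copies of $S^1\times B^3$. Each such surgery raises $b_1$ by at most one, which is where the needed bound comes from. So keeping the branch locus disconnected buys nothing.

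This input needs $q$ to be a prime power. For general $q$, apply it to a prime-power divisor $p^a>1$ of $q$. That suffices because $\frac{6q}{q+1}$ increases with $q$, so the inequality for $p^a$ implies the one for $q$.
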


We immediately obtain the proof of Theorem \ref{thm:evenbound} and one more corollary.

\begin{proof}[Proof of Theorem \ref{thm:evenbound}]
This is now a consequence of Lemma~\ref{l:largef0}: if $\L$ is even and has $t_2 = t_4 = 0$, then either it is a pencil or $f_0 > 2d$.
If $\L$ is not a pencil,
\[
\sum (m-4)t_m \ge 2 \sum t_m = 2f_0 > d,
\]
which contradicts Theorem~\ref{t:divisible}, so $\L$ is a pencil.
\end{proof}

\begin{cor}\label{c:smalldivisibility}
No non-trivial $q$-divisible combinatorial line arrangement can be topologically realised in $\CP$, if $q \ge 7$ is a prime power.
\end{cor}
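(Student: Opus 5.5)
The plan is to derive this directly from Theorem~\ref{t:divisible}, combined with Lemma~\ref{l:largef0}, by a counting argument exactly parallel to the proof of Theorem~\ref{thm:evenbound}. Suppose $\L$ is a non-trivial $q$-divisible arrangement of degree $d$, topologically realised in $\CP$, with $q\ge 7$ a prime power. I expect the prime-power hypothesis to be exactly what the proof of Theorem~\ref{t:divisible} needs (the cyclic branched cover and the Betti number estimates used alongside the $G$-signature theorem). So here I simply invoke the inequality
\[
\sum_m \bigl(m-\tfrac{6q}{q+1}\bigr)t_m \le d .
\]

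First, every singular point has multiplicity divisible by $q$, so $t_m\neq 0$ forces $m\ge q$. In particular $t_2=0$ because $q\ge 7>2$, so $\L$ cannot be an almost-pencil; since it is non-trivial, it is not a pencil either. Lemma~\ref{l:largef0} then gives $f_0\ge d$.

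Second, each coefficient satisfies
\[
m-\tfrac{6q}{q+1}\ \ge\ q-\tfrac{6q}{q+1}=\tfrac{q(q-5)}{q+1}.
\]
For $q\ge 7$ this is at least $\tfrac{14}{8}>1$, since $q(q-5)/(q+1)$ is increasing in $q$. Hence
\[
\sum_m \bigl(m-\tfrac{6q}{q+1}\bigr)t_m\ \ge\ \tfrac{q(q-5)}{q+1}\,f_0\ >\ f_0\ \ge\ d .
\]
Here the strict inequality uses $f_0\ge d>0$. This contradicts Theorem~\ref{t:divisible}, so no such realisation exists.

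There is no real obstacle at this level: all the work is contained in Theorem~\ref{t:divisible}. The only points to check are that the non-triviality hypothesis really rules out the almost-pencil case, which the divisibility does since $t_2=0$, and that the threshold $q\ge 7$ is where $q(q-5)/(q+1)$ first exceeds $1$. Indeed, for $q=5$ the coefficient vanishes, and for $q=6$ it equals $6/7<1$, which is why the argument stops there.
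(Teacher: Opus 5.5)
Your proof is correct and follows the paper's argument exactly. Divisibility forces $t_m=0$ for $m<q$, so each coefficient $m-\frac{6q}{q+1}$ exceeds $1$, giving $f_0<\sum_m\bigl(m-\frac{6q}{q+1}\bigr)t_m\le d$ and contradicting Lemma~\ref{l:largef0}. Your explicit bound $\frac{q(q-5)}{q+1}\ge\frac{7}{4}$, and your check that non-triviality excludes the pencil case (which is what allows Lemma~\ref{l:largef0} to be invoked), only spell out what the paper leaves implicit.
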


\begin{proof}
Suppose that $q \ge 7$ and that a $q$-divisible line arrangement $\L$ is topologically realised.
Since $\L$ is $q$-divisible, $t_m = 0$ for $m < q$.
Moreover, $\frac{6q}{q+1} < 6$, so $m-\frac{6q}{q+1} > 1$ for every $m \ge q$.

Combining these observations with Theorem~\ref{t:divisible},
\[
f_0 < \sum_m (m-{\textstyle \frac{6q}{q+1}})t_m \le d,
\]
but this contradicts Lemma~\ref{l:largef0}.
\end{proof}

Before proving Theorem~\ref{t:divisible}, we state and prove a more general lemma.
We still use the terminology from algebraic geometry when we blow up and take the proper transform of a realisation:
in particular, blowing up at all singular points of the configuration makes the proper transform embedded.

\begin{lemma}\label{l:divisiblecover}
Let $q=p^\ell$ be a prime power. Let $\L$ be a topological realisation of a $q$-divisible line arrangement in $\CP$, and $Y$ be the blow-up of $\CP$ at all singularities of $\L$.
Let $S$ be the $2$-sphere obtained by tubing the $2$-spheres in the proper transform of $\L$.
Then there exist a double cover $X \to Y$, ramified over $S$.
The $4$-manifold $X$ satisfies:
\begin{align*}
\sigma(X) &= \frac{(q^2-1)f_1 - 3q^2f_0 - q^2d + 3q^2 + d}{3q},\\
b_2(X) &= q + qf_0.
\end{align*}

If we specialise to $q = 2$:
\begin{align*}
\sigma(X) &= \frac{4-4f_0+f_1-d}2,\\
b_2(X) &= 2 + 2f_0.
\end{align*}
\end{lemma}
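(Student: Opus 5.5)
The plan is to read the covering in the statement as a $q$-fold cyclic branched cover, which is what the formulas require: for $q=2$ it is the double cover. Build $X\to Y$ as the $\Z/q$-cover of $Y$ branched along $S$. Then compute $b_2(X)$ from the Euler characteristic and a vanishing result for $b_1$, and compute $\sigma(X)$ from the $G$-signature theorem.

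\emph{Existence.} Tubing the proper transforms does not change the homology class. By the proof of Lemma~\ref{l:f1f2}, $[S]=dh-\sum_p m_p e_p$. Since $\L$ is $q$-divisible, every coefficient is divisible by $q$, so $[S]=q\alpha$ for some $\alpha\in H_2(Y)$. Hence there is a homomorphism $H_1(Y\setminus S)\to\Z/q$ sending a meridian of $S$ to $1$: by Lefschetz duality, $H_1(Y\setminus \nu S)\cong H^3(Y,\nu S)$, and the meridian class has order dividing the divisibility of $[S]$. This homomorphism gives an unbranched $\Z/q$-cover of $Y\setminus\nu S$, and we complete it over $\nu S$ in the standard way. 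Because $S$ is only locally flat, I would use the topological normal bundle (Freedman--Quinn), so that $X$ is a closed topological $4$-manifold with a $\Z/q$-action fixing a copy $\tilde S$ of $S$.

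\emph{Betti numbers.} Use the Riemann--Hurwitz count
\[
\chi(X)=q\chi(Y)-(q-1)\chi(S)=q(3+f_0)-2(q-1)=qf_0+q+2 .
\]
The main obstacle is to show $b_1(X)=0$; then $b_3(X)=0$ and $b_2(X)=q+qf_0$. Here the prime-power hypothesis enters. First I would try a transfer/Smith-theory argument with $\F_p$ coefficients, writing the cover as a tower of $\Z/p$-covers branched along the connected surface $S$.
\begin{itemize}
\item At each stage, the fixed set of the $\Z/p$-action is a sphere with $H_1(\,\cdot\,;\F_p)=0$.
\item The base satisfies $H_1(\,\cdot\,;\F_p)=0$, since $Y$ is simply connected and this is inherited inductively.
\item Smith inequalities for $\Z/p$-actions give $\dim H_*(X;\F_p)\ge\dim H_*(\tilde S;\F_p)$. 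More usefully, the Euler-characteristic and long-exact-sequence form $H_1(X;\F_p)=0$ should follow from the fact that $\pi_1(X\setminus\tilde S)$ is an index-$q$ subgroup of $\pi_1(Y\setminus S)$, and that a meridian of $\tilde S$ normally generates $\pi_1(X)$.
\end{itemize}
Alternatively, I would argue directly with $\pi_1$: $\pi_1(Y\setminus S)$ is normally generated by a meridian $\mu$, because $Y$ is simply connected. In $X$, the lift $\mu^q$ bounds a disc, so $\pi_1(X)$ is a quotient of the kernel with $\mu^q=1$. Its abelianisation is then controlled by the Alexander-type module of $S$, whose $p$-torsion vanishes for $q$ a $p$-power, as in the classical argument for branched covers of knots in $S^3$. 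This gives $H_1(X;\Q)=0$.

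\emph{Signature.} Apply the $G$-signature theorem to the $\Z/q$-action with fixed surface $\tilde S$. Here $\tilde S\cdot\tilde S=(S\cdot S)/q$, and the standard consequence of the theorem for cyclic branched covers is
\[
\sigma(X)=q\,\sigma(Y)-\frac{q^2-1}{3q}\,S\cdot S .
\]
By Lemma~\ref{l:f1f2}, $S\cdot S=\L\cdot\L=d-f_1$, and $\sigma(Y)=1-f_0$. Substituting gives
\[
\sigma(X)=\frac{3q^2-3q^2f_0-(q^2-1)d+(q^2-1)f_1}{3q},
\]
which is the claimed formula. Setting $q=2$ gives $\sigma(X)=(4-4f_0+f_1-d)/2$ and $b_2(X)=2+2f_0$. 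In the locally flat setting, the $G$-signature theorem needs the topological version for locally linear actions (Wall, Madsen--Milgram), which applies because the action is linear near $\tilde S$ by construction.
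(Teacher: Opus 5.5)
Your outline matches the paper's: the $q$-fold cyclic cover branched over $S$ (you are right that ``double'' should read ``$q$-fold''), Riemann--Hurwitz for $\chi(X)$, then $b_1(X)=0$, then the $G$-signature formula combined with Lemma~\ref{l:f1f2}. The signature computation is correct. The existence step is also correct up to a slip: what you need is that the meridian $\mu$ \emph{generates} $H_1(Y\setminus S;\Z)\cong\Z/D$, with $D$ the divisibility of $[S]$, and that $q\mid D$. It is not enough that the order of $\mu$ divides $D$.

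The genuine gap is the step you flag yourself, $b_1(X)=0$. None of your sketches proves it, and several of their claims are false.
\begin{itemize}
\item The Smith inequality you quote bounds $H_*(X;\F_p)$ from below, so it cannot give vanishing.
\item If a meridian of $\tilde S$ normally generated $\pi_1(X)$, then $X$ would be simply connected, since that meridian is null-homotopic in $X$. This fails in general, because $S$ may be knotted.
\item There is no Alexander module in the classical sense: $H_1(Y\setminus S;\Z)\cong\Z/D$ is finite, so there is no infinite cyclic cover.
\item Even if there were, vanishing of $p$-torsion says nothing about rank, so it would not give $H_1(X;\Q)=0$.
\end{itemize}
Above all, your sketch never identifies how $q=p^\ell$ enters, and it must enter: the $6$-fold cyclic branched cover of $S^3$ along the trefoil has $b_1=2$.

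The paper settles this step by citing the Goldschmidt lemma~\cite{Hsiang-Szczarba:1970}. Its input is exactly that $H_1(Y\setminus S;\Z/q\Z)\cong\Z/q\Z$ is generated by the meridian. Your $\pi_1$ set-up is the right starting point: $\mu$ normally generates $\pi=\pi_1(Y\setminus S)$, and $\pi_1(X)$ is the kernel $K$ of $\pi\to\Z/q$ modulo $\mu^q$. What is missing is a Nakayama argument over the local ring $\F_p[\Z/q]=\F_p[t]/(t-1)^q$, which goes as follows.
\begin{enumerate}
\item Let $M=H_1(K;\F_p)$, with $t$ acting by conjugation by $\mu$.
\item The quotient $\pi/[K,\pi]$ is a central extension of the cyclic group $\pi/K$, so it is abelian. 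Since $\mu$ normally generates $\pi$, this abelian group is generated by $\mu$.
\item Hence $K/[K,\pi]$ is generated by $\mu^q$, so the coinvariants $M/(t-1)M$ are spanned by $[\mu^q]$.
\item But $\mu^q$ is the meridian of $\tilde S$, so $H_1(X;\F_p)=M/\langle[\mu^q]\rangle$ has trivial coinvariants.
\item On any $\F_p[\Z/q]$-module, $(t-1)^q=t^q-1=0$; this is precisely where $q=p^\ell$ is used. A module $N$ with $N=(t-1)N$ therefore vanishes.
\item So $H_1(X;\F_p)=0$, and in particular $b_1(X)=0$.
\end{enumerate}
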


\begin{proof}
Since $\L$ is a $q$-divisible, the homology class of $S$,
\[
[S] = dh - \sum_{i=1}^{f_0} m_i e_i,
\]
is divisible by $q$.

We claim that $H_1(Y\setminus S;\Z/q\Z) = \Z/q\Z$, which in turn implies that there is a (unique) double cover of $Y$ ramified over $S$.
(From now on, in this proof we will work with coefficients in $\Z/q\Z$, unless otherwise stated, but omit it from the notation.)
To see this, let $N$ be a small, open tubular neighbourhood of $S$.
By homotopy invariance, Poincar\'e--Lefschetz duality, and excision:
\[
H_1(Y\setminus S) \cong H_1(Y\setminus N) \cong H^3(Y \setminus N, \del N) \cong H^3(Y,N) \cong H^3(Y,S),
\]
and the latter group fits into the long exact sequence of the pair $(Y,S)$:
\[
H^2(Y) \to H^2(S) \to H^3(Y,S) \to H^3(Y) = 0.
\]
Since $H_*(Y;\Z)$ has no $p$-torsion, the leftmost map is the transpose of the map $H_2(S) \to H_2(Y)$.
Since the class $[S]$ is divisible by $q$ and $H_*(Y;\Z)$ has no $p$-torsion, $H_2(Y) = H_2(Y;\Z)\otimes \Z/q\Z$, and therefore $[S] = 0 \in H_2(Y)$.
In turn, this implies that the map $H_2(S) \to H_2(Y)$ vanishes and so that
\[
\Z/q\Z \cong H^2(S) \cong H^3(Y,S) \cong H_1(Y \setminus S),
\]
as we had set out to prove.

Now, $Y$ is a blow-up of $\CP$ at $f_0$ points.
The Euler characteristic of $X$ is:
\[
\chi(X) = q\chi(Y) - (q-1)\chi(S) = q(3+f_0) - 2(q-1) = q + 2 + qf_0.
\]

Since $H_2(Y\setminus S;\Z/q\Z) = \Z/q\Z$, and $q$ is a prime power, by the Goldschmidt lemma~\cite{Hsiang-Szczarba:1970} we have $b_1(X) = 0$, so that
\[
b_2(X) = \chi(X) - 2 = q + qf_0.
\]
Finally, by the G-signature theorem~\cite{Atiyah-Singer:1968} and Lemma~\ref{l:f1f2}:
\begin{align*}
\sigma(X) &= q\sigma(Y) - \frac{q^2-1}{3q} S\cdot S = q(1-f_0) - \frac{q^2-1}{3q}(d-f_1) \\
& = \frac{(q^2-1)f_1 - 3q^2f_0 - q^2d + 3q^2 + d}{3q}.\qedhere
\end{align*}
\end{proof}

We are now ready to prove Theorem~\ref{t:divisible}.

\begin{proof}[Proof of Theorem~\ref{t:divisible}]
Since $\L$ is $q$-divisible, we can take the $q$-fold cyclic cover $X\to Y$ as in Lemma~\ref{l:divisiblecover}.

Within $X$ we have $f_0$ pairwise disjoint surfaces of self-intersection $-q$, namely the preimages of the exceptional divisors of the blow up $Y \to \CP$.
Therefore, $b_2^-(X) \ge f_0$.

On the other hand, by Lemma~\ref{l:divisiblecover},
\begin{align*}
2b_2^-(X) &= b_2(X) - \sigma(X) = q + qf_0 - \frac{(q^2-1)f_1 - 3q^2f_0 - q^2d + 3q^2 + d}{3q},\\
& = \frac{- (q^2-1)f_1 + 6q^2 f_0 + (q^2-1)d}{3q}.
\end{align*}

Let us compare this computation with the inequality $b_2^-(X) \ge f_0$:
\begin{align*}
\frac{- (q^2-1)f_1 + 6q^2 f_0 + (q^2-1)d}{3q} &\ge 2f_0 \Longleftrightarrow \\
\frac{(q^2-1)d}{3q} &\ge \frac{(q^2-1)f_1 - 6q(q-1) f_0}{3q} \Longleftrightarrow\\
\sum_m (m-{\textstyle \frac{6q}{q+1}})t_m &\le d,
\end{align*}
which proves the first part of the statement.

Specialising to $q = 2$ we get the second half.
\end{proof}

\section{The odd case}\label{s:odd}

\begin{thm}\label{t:odd}
Let $\L$ be a line arrangement with all odd multiplicities: $t_{2k}(\L) = 0$ for every $k$. Suppose that $\L$ is smoothly realized in $\mathbb{CP}^2$. Then either $\L$ is a pencil or
\[
\sum_m (m-9)t_m + 1 + 7d \le 0.
\]
\end{thm}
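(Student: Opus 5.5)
The plan is to build a closed spin $4$-manifold out of a smooth realisation of $\L$ and apply Furuta's 10/8-Theorem, as sketched in the introduction. Let $Y=\CP\#f_0\overline{\CP}$ be the blow-up at all singular points and $L_1,\dots,L_d$ the proper transforms. If the line $L_i$ contains $r_i$ singular points, then $L_i$ is a smooth sphere with $[L_i]=h-\sum_{p\in L_i}e_p$ and $L_i\cdot L_i=1-r_i$. We also have $\sum_i r_i=f_1$.

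\emph{Parity and characteristic surface.} Counting the other lines met by $L_i$ gives $\sum_{p\in L_i}(m_p-1)=d-1$. Since every $m_p$ is odd, the left-hand side is even, so $d$ is odd. Hence $[\bigcup L_i]=dh-\sum m_pe_p$ has all coefficients odd, so $\Sigma=\bigcup_i L_i$ is a characteristic surface in $Y$. It follows that $W=Y\setminus\nu\Sigma$ is spin, and its spin structures are exactly those not extending across any $L_i$.

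\emph{Gluing.} Assume $\L$ is not a pencil, so $r_i\ge2$ for every $i$, and put $n_i=r_i-1\ge1$. Then $\partial\nu L_i=L(n_i,1)$. I would cap off each boundary component of $W$ with the linear plumbing $P_i$ of $n_i-1$ spheres of self-intersection $+2$. This plumbing is positive definite and even, and its boundary is $L(n_i,1)$ with the orientation needed for gluing, so the result $X=W\cup\bigcup P_i$ is closed.

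\emph{Spin structures (main obstacle).} I expect the hardest step to be showing that some spin structure on $W$ restricts on each $L(n_i,1)$ to the spin structure that extends over $P_i$. An even plumbing carries a unique spin structure, but $L(n_i,1)$ has two spin structures when $n_i$ is even. I would identify the boundary restriction of the spin structure on $W$ by comparing correction terms $d(L(n,1),\ft)$ of the two spin structures. The relevant structures are the one extending over the $(-n)$-disk bundle with a characteristic (odd) core and the one extending over $P_i$. Concretely, I would compute which characteristic covector on the disk bundle restricts to which spin structure on the lens space. If the two structures disagree for some $n_i$, I would use a different positive plumbing there, for instance with a single odd-weight sphere arranged so that the union stays spin. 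In that case the Betti and signature counts below change only by a bounded amount.

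\emph{Numerics.} Since $P_i$ is positive definite with $b_2=\sigma=n_i-1$, and $\nu L_i$ contributes $-1$ to both $b_2$ and $\sigma$ (because $n_i\ge1$), we get
\[
b_2(X)=1+f_0-d+\sum_i(r_i-2)=1+f_0+f_1-3d,\qquad \sigma(X)=1-f_0+d+\sum_i(r_i-2)=1-f_0+f_1-d.
\]
I would first check that $b_1(X)=0$ and settle the sign of $\sigma(X)$. If $\sigma(X)\le0$, the inequality should follow directly from $f_1\le f_0+d-1$ together with the estimates of Section~\ref{s:combinatorics}. Otherwise Furuta's bound $b_2(X)\ge\frac54\sigma(X)+2$ gives
\[
8+8f_0+8f_1-24d\ \ge\ 10-10f_0+10f_1-10d+16,
\]
that is, $\sum_m(m-9)t_m+7d+9\le0$. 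This is stronger than the claimed $\sum_m(m-9)t_m+1+7d\le0$, so there is slack of $8$ to absorb any extra $\pm2$ spheres forced by the spin-structure analysis. The exact constant will depend on which plumbings have to be used in that step.
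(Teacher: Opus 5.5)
\textbf{Overall.} Your construction matches the paper's: blow up, excise the $\nu L_i$, and cap with the $(+2)$-chains $P_i$. So do your counts $b_2(X)=1+f_0+f_1-3d$ and $\sigma(X)=1-f_0+f_1-d$, and the appeal to Furuta. The paper's displayed $b_2(X)=1-3d+f_0-f_1$ is a typo, and it uses $+f_1$ afterwards. Your parity argument that $d$ is odd supplies a point the paper leaves implicit when it calls $\sum[S_k]$ characteristic.

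\textbf{The spin step you leave open.} Your own approach closes it, and no modified plumbing is needed. Your fallback could not work in any case: a closed spin $4$-manifold has even intersection form, so it cannot contain a sphere of odd square. The argument has two parts.

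First, consider the relative obstruction class of a spin structure on your $W$. It lies in $H^2(Y,W;\Z/2)\cong(\Z/2)^d$, records over which $\nu L_i$ the structure fails to extend, and maps to $w_2(Y)$. Changing the spin structure changes this class by $\delta H^1(W;\Z/2)=\ker\bigl((\Z/2)^d\to H^2(Y;\Z/2)\bigr)$. Since $w_2(Y)=\sum_i\mathrm{PD}[L_i]$, some spin structure on $W$ fails to extend over every $\nu L_i$. Your ``exactly those'' is too strong when the $[L_i]$ satisfy a mod-$2$ relation, but only existence is used.

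Second, for $n$ even, $L(n,1)$ has two spin structures, and the one extending over $P_n$ is not the one extending over $D_{-n}$. The Ozsv\'ath--Szab\'o inequality gives $d\ge\frac14$ for the $D_{-n}$ structure and $d\le-\frac{n-1}4$ for the $P_n$ structure. Alternatively, $D_{-n}\cup\overline{P_n}$ is closed, smooth and negative definite, hence not spin by Donaldson. So your non-extending structure is the one that extends over $P_n$. For $n$ odd there is nothing to check.

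\textbf{Comparison with the paper.} The paper tubes the $L_i$ into one connected characteristic sphere $S$. Then $H_1(Y\setminus S;\Z/2)=0$ makes the spin structure on the complement unique, hence non-extending over $\nu S$. It then recovers the componentwise statement by applying the $d$-invariant inequality to a negative-definite cobordism from $\partial\nu S$ to $\bigsqcup_k L(s_k,1)$, where equality forces every restriction to be $\fs_{\rm g}$. Your route gets the componentwise statement directly from obstruction theory. It needs Floer theory (or Donaldson) only for the local fact on $L(n,1)$, which the paper also takes as known, so it is the more elementary argument.

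For the final step, the paper uses only $5\sigma\le 4b_2$, which holds whatever the sign of $\sigma(X)$, so it needs no case split. Your split is correct but unnecessary. For $\sigma(X)\le 0$ you get $\sum_m(m-9)t_m+1+7d\le 8(d-f_0)\le 0$ by the Basterfield--Kelly bound $f_0\ge d$. For $\sigma(X)>0$ the $+2$ term yields the stronger constant $9$.
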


As a direct consequence we obtain a proof of Theorem \ref{thm:oddbound}.

\begin{proof}[Proof of Theorem \ref{thm:oddbound}]
If $t_3 = t_5 = t_7 = 0$, the left-hand side of the inequality in Theorem~\ref{t:odd} is strictly positive.
\end{proof}

\begin{proof}[Proof of Theorem~\ref{t:odd}]
Suppose that $\L$ is not a pencil. Blow up $\CP$ at all singularities of $\L$, to get a 4-manifold $Y$ containing the proper transform of $\L$, which consists of $d$ pairwise disjoint spheres $S_1 \sqcup \dots \sqcup S_d \subset Y$, each of negative self-intersection.

Since $\L$ is an odd line arrangement, the homology class $[S_1] + \dots + [S_d]$ is characteristic in the lattice $\Lambda_Y$ (which is $H_2(Y)$ equipped with its intersection form).
For $k = 1, \dots, d$, let $N_k$ be a small tubular neighbourhood of $S_k$.
The boundary of $N_k$ (with the orientation induced by $N_k$), which we call $L_k$, is diffeomorphic to the lens space $L(s_k, 1)$.

We will be interested in spin structures on $L_k$, and we recall a general fact about them.
If $s_k$ is odd, there is one spin structure on $L_k$, and this extends to the linear plumbing $P_{s_k}$ of $s_k-1$ spheres of self-intersection $+2$ (which in turn admits a unique spin structure, since it is simply-connected and has even intersection form).
If $s_k$ is even, $L_k$ admits two spin structures: one that extends to $N_k$, and the other one that extends to $P_{s_k}$.
For the purposes of this proof, we call the latter spin structure \emph{good}, and denote it with\footnote{We drop the dependence on the lens space, but this will hopefully not create any confusion.} $\fs_{\rm g}$ (the other one will be $\fs_{\rm b}$).
(When $s_k$ is odd, the unique spin structure on $L_k$ will also be \emph{good}, and will be denoted by $\fs_{\rm g}$.)

The good spin structure on $L_k$ is characterised by its Heegaard Floer correction term (or $d$-invariant).
Indeed, since $L_k$ is a rational homology spheres, spin structures on $L_k$ correspond to self-conjugate spin$^c$ structures on $L_k$, and to a spin$^c$ structure $\ft$ on a rational homology sphere $Y$ one can associated the rational number $d(Y,\ft)\in \Q$.
It turns out that
\[
d(L(n,1),\fs_{\rm g}) = -\frac{n-1}4,\qquad d(L(2n,1),\fs_{\rm b}) = \frac14.
\]

We claim that there exist a spin structure on $Y$ that restricts to the good spin structure on $L_k$ for each $k$.
To see this, start by tubing all spheres $S_1, \dots, S_d$, to obtain a characteristic sphere $S \subset Y$.
Since $\L$ is an odd line arrangement, $Y\setminus S$ is spin.
As in the proof of Lemma~\ref{l:divisiblecover}, since $\L$ is odd, we have $H_1(Y\setminus S; \Z/2\Z) = 0$, so that the spin structure on $Y \setminus S$ is unique.
Since $Y$ itself is \emph{not} spin, the spin structure on $Y$ does not extend to the neighbourhood $N$ of $S$, so it is necessarily good.

We now `undo the tubing' of $S$ to get back to $S_1 \sqcup \dots \sqcup S_d$.
To be more precise, we can assume (up to isotopy) that $S$ is contained in the union $U$ of the neighbourhoods $N_1, \dots, N_d$ and of the tubular neighbourhoods of the arcs we used to guide the tubing.
We can also assume that the tubular neighbourhood $N$ of $S$ mentioned above is contained in $U$.
Call $L$ the boundary of $N$.

Then $W := U\setminus N$ is a cobordism from $L$ to $L_1 \sqcup \dots \sqcup L_d$, with the following properties:
\begin{itemize}
\item it inherits a spin structure from $Y\setminus S$, which in turn restricts to $\fs_{\rm g}$ on $L$;
\item it is negative definite, since it is contained in $N_1 \# \dots \# N_d$, which is a connected sum of neighbourhoods of spheres of negative self-intersection, and has $b_2(W) = d-1$.
\end{itemize}
Denote with $\ft_k$ the restriction of the spin structure of $W$ to $L_k$, which we view as a self-conjugate spin$^c$ structure on $L_k$.
Applying the Ozsv\'ath--Szab\'o inequality\footnote{Technically, after digging a tunnel between $L_k$ and $L_{k+1}$ for each $k < d$.} to the spin$^c$ structure on $W$ with trivial first Chern class (that is to say, to any spin structure), we have
\begin{align*}
-\frac{(\sum_k s_k)-1}4 = d_L(L,\fs_{\rm g}) &\le {\textstyle \frac14}b_2(W) + d(L_1,\ft_1) + \dots + d(L_d,\ft_d) \\
&\le \frac{d-1}4 - \frac{\sum (s_k-1)}4 = -\frac{(\sum_k s_k)-1}4,
\end{align*}
which forces all inequalities $d(L_k,\ft_k) \le -\frac{s_k-1}4$ to be equalities, which in turn implies that $\ft_k = \fs_{\rm g}$ for every $k$.

Now we glue $P_{s_1}\sqcup \dots \sqcup P_{s_d}$ to $Y \setminus (N_1 \sqcup \dots \sqcup N_d)$ along their common boundary.
Call $X$ the resulting closed $4$-manifold.

Since $Y' := Y \setminus (N_1 \sqcup \dots \sqcup N_d)$ has a spin structure that restricts to $\fs_{\rm g}$ on each of its boundary component, $X$ is spin.

We now compute $b_2(X)$ and $\sigma(X)$.
First, by additivity of the Euler characteristic,
\begin{align*}
\chi(X) &= \chi(Y') + \sum_k \chi(P_{s_k}) = \chi(Y) - \sum_k \chi(N_k) + \sum_k \chi(P_{s_k})\\
& = 3 + f_0 - 2d + \sum_k |s_k| = 3 + f_0 - 2d + f_1 - d = 3-3d+f_0+f_1.
\end{align*}
Moreover, $b_1(X) = 0$ since $H_1(Y;\Q) = 0$ (this is because the spheres $S_1, \dots, S_d$ are linearly independent in $H_2(Y)$: they each have negative self-intersection and they are pairwise disjoint) and $H_1(P_{s_k}) = 0$ for each $k$.
Then $b_2(X) = 1-3d+f_0-f_1$.

Now, by Novikov additivity,
\begin{align*}
\sigma(X) &= \sigma(Y') + \sum_k \sigma(P_{s_k}) = \sigma(Y) - \sum_k \sigma(N_k) + \sum_k \sigma(P_{s_k})\\
& = 1-f_0 + d + \sum_k (|s_k|-1) = 1 - d - f_0 + f_1.
\end{align*}

Applying Furuta's 10/8-Theorem, $5\sigma(X) \le 4b_2(X)$, we obtain:
\[
5\cdot (1-d - f_0 + f_1) \le 4\cdot(1-3d+f_0+f_1) \Rightarrow f_1 - 9f_0 \le -1 - 7d,
\]
which is just
\[
\sum_m (m-9)t_m \le -1-7d.\qedhere
\]
\end{proof}

\section{Configurations and divisible arrangements}\label{s:configurations}

Recall from the introduction that a combinatorial $(n_k)$-configuration must satisfy $n \ge k^2-k+1$.
In this section, we push the bound a bit higher for configurations that are topologically realisable.

\begin{thm}\label{t:configurations}
If a line arrangement of degree $n$ and with $t_k \ge n$ is realisable, then:
\[
n \ge \left\{\begin{array}{ll}
k^2 - 4 & \mbox{if } n \equiv 0,\, k \equiv 0 \pmod 2\\
k^2 - 5 & \mbox{if } n \equiv 1,\, k \equiv 0 \pmod 2\\
k^2 - 5 & \mbox{if } n \equiv 0,\, k \equiv 1 \pmod 2\\
k^2 - 4 & \mbox{if } n \equiv 1,\, k \equiv 1 \pmod 2\\
\end{array}
\right.
\]
Therefore, regardless of the parity of $n$ and $k$,
\[
n \ge k^2-5.
\]
In particular, this applies to $(n_k)$-configurations.
\end{thm}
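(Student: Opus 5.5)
The plan is to reduce to a branched double cover, as in Lemma~\ref{l:divisiblecover}, even though the arrangement is not $2$-divisible. We then play $b_2^-$ against the $G$-signature computation, exactly as in the proof of Theorem~\ref{t:divisible}. The four parity cases suggest two preliminary normalisations.

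\emph{First normalisation (parity of $n$).} If $n$ is odd, we delete one line $L_0$ of the realisation. This gives a topologically realised arrangement of degree $n-1$. The $k$-fold points off $L_0$ keep multiplicity $k$, and those on $L_0$ (at least $k$ of them, since every $k$-fold point lies on $k$ lines and each line carries $k$ of them in a configuration; in general we use the count of $k$-fold points on $L_0$) become $(k-1)$-fold. So after this step we may assume the degree is even, at the cost of having points of two consecutive multiplicities $k$ and $k-1$. This accounts for the shift between $k^2-4$ and $k^2-5$.

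\emph{Second normalisation (odd multiplicities).} Blow up all singular points to get $Y$, with $[\L]=dh-\sum m_pe_p$. With $d$ even, the class
\[
B=\L+\sum_{m_p \text{ odd}} E_p
\]
is divisible by $2$. The branch locus is not embedded: $E_p$ meets the proper transform of each line through $p$ once. Two ways to handle this:
\begin{itemize}
\item[(a)] Blow up these $m_p$ further intersection points and take the proper transform. This changes $[B]$ by $-2$ times each new exceptional class, so divisibility is preserved, and the branch locus becomes embedded.
\item[(b)] Keep the nodes and smooth them, which keeps $B$ characteristic mod $2$ in the relevant sense.
\end{itemize}
I would do (a). The Goldschmidt argument of Lemma~\ref{l:divisiblecover} then applies verbatim to the tubed sphere $S$: we get a double cover $X\to Y'$ with $b_1(X)=0$, $b_2(X)=2\chi(Y')-\chi(S)-2$, and
\[
\sigma(X)=2\sigma(Y')-\tfrac12 S\cdot S .
\]
Here $S\cdot S$ is computed from Lemma~\ref{l:f1f2} together with the contributions of the $E_p$ and the new exceptional curves.

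\emph{Main inequality.} Next we count disjoint negative-definite classes in $X$ to bound $b_2^-(X)$ from below.
\begin{itemize}
\item For even $m_p$, the preimage of $E_p$ is a sphere of square $-2$.
\item For odd $m_p$, the preimage of the proper transform of $E_p$ (now in the branch locus, with square $-1-m_p$) is a sphere of square $(-1-m_p)/2$.
\item The new exceptional curves of step (a) each lift to a sphere of square $-2$.
\end{itemize}
All of these are pairwise disjoint, which gives $b_2^-(X)\ge$ the total count. Comparing this with $2b_2^-(X)=b_2(X)-\sigma(X)$ yields a linear Hirzebruch-type inequality in the $t_m$ and $d$.

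We then substitute the combinatorial relations: $t_k\ge n$ (or the modified counts after deleting a line), and $\sum_m\binom m2 t_m=\binom d2$, which controls the remaining double points. In each of the four parity cases this should reduce to a quadratic inequality in $k$ and $n$ that rearranges to $n\ge k^2-4$ or $n\ge k^2-5$.

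I expect the main obstacle to be this last bookkeeping. We have to choose the branch locus so that the negative spheres are genuinely disjoint and independent. The extra blow-ups in (a) add equally to $b_2$ and to the count of negative spheres, and we must check that they do not wash out the gain. Finally, the lower-order terms must land exactly on the constants $-4$ and $-5$. If (a) loses too much, I would fall back to (b) and compute the signature defect at the nodes directly.
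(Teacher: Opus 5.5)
Your plan cannot reach the stated bounds when $k$ is odd, and the final step (``this should reduce to \dots\ $n\ge k^2-4$ or $n\ge k^2-5$'') fails there.

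\textbf{The main gap: odd $k$.} Even granting your count of negative classes, an odd point of multiplicity $m$ contributes $-(m+3)$ to the left-hand side of the resulting inequality $\sum_m (m-4)t_m\le d$. This is exactly what you would get by perturbing it into an $(m-1)$-fold point and $m-1$ nodes. An even point contributes $m-4$.

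When $k$ is odd, your normalisation leaves essentially all $n$ of the $k$-fold points odd. With $t_2=\binom{n}{2}-n\binom{k}{2}$ this gives only $n\ge k^2-2k-3$ for $n$ even. For $n$ odd (for a configuration, after deleting one line) it gives $(n-1)^2\ge n(k^2-2k-3)$, i.e.\ roughly $n\ge k^2-2k-1$.

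The missing idea is a cheaper way to make multiplicities even: delete $k-1$ (if $n$ is even) or all $k$ (if $n$ is odd) of the lines through a single $k$-fold point $x$. These lines pairwise meet only at $x$. So every other $k$-fold point on them loses exactly one line and becomes $(k-1)$-fold without creating any node. Only the $k$-fold points off these lines need to be perturbed.

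This yields an even $(n-k+1,n-1,k-1)$-arrangement, resp.\ an even $(n-k,n-1,k-1)$-arrangement. Theorem~\ref{t:divisible} with $q=2$, in the form $d^2\ge(\ell^2-4)t$, then gives $(n-k+1)^2\ge(k^2-2k-3)(n-1)$, resp.\ $(n-k)^2\ge(k^2-2k-3)(n-1)$. The gain of about $2k$ over the naive bound comes from $d^2$ dropping by about $2kn$ while $t$ stays $n-1$.

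For $k$ even your outline is essentially the paper's, with two provisos. The $(k-1)$-fold points created by deleting a line must be perturbed. The deleted line must carry \emph{at most} $k$ of the $k$-fold points, since the inequality weakens as that number grows; averaging guarantees such a line exists.

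\textbf{The cover in (a) is not set up consistently.} You tube the branch locus into one sphere $S$ in order to get $b_1(X)=0$. But then each $E_p$ with $m_p$ odd is part of $S$, so its preimage is no longer a closed surface and cannot be counted in $b_2^-(X)$.

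The classes are also not pairwise disjoint. The lift of the exceptional curve over a point of $E_p\cap L$ meets $\tilde E_p$ once, because that point lies on the branch locus. Without tubing, their span would still be negative definite: a star with centre of weight $-(m_p+1)/2$ and $m_p$ legs of weight $-2$.

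If you do not tube, however, $b_1(X)$ is uncontrolled. It enters $b_2^-(X)=\frac12(\chi(X)-2+2b_1(X)-\sigma(X))$ with the wrong sign. For example, for a pencil of $d\ge 4$ lines, $d$ even, the double cover branched over the disjoint fibres has $b_1=d-2$.

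In the tubed cover, keep only the legitimate classes: the lifts of $E_p$ with $m_p$ even (surfaces of genus $m_p/2-1$, not spheres) and the $(-2)$-spheres over the new exceptional curves. Then an odd point costs $-(m+7)$, which is worse still. So drop (a)/(b), perturb the remaining odd points, and combine this with the line deletions described above.
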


We give an auxiliary definition.

\begin{defn}
Let $d$, $t$, $t'$, and $\ell$ be positive integers, with $d$ and $\ell$ even.
A \emph{$(d,t,\ell)$-arrangement} is a combinatorial line arrangement of degree $d$, and which has $t$ points of multiplicity $\ell$ and whose only other singularities are double points.

A \emph{$(d,t,t',\ell)$-arrangement} is a combinatorial line arrangement of degree $d$, and which has $t$ points of multiplicity $\ell$, $t'$ points of multiplicity $\ell-2$, and whose only other singularities are double points.
\end{defn}

We stipulate that a $(d,t,\ell)$-arrangement and a $(d,t,t',\ell)$-arrangement always have $d$ and $\ell$ even, and in particular they are always even arrangements.
Note also that a $(d,t,\ell)$-arrangement has exactly ${d \choose 2} - t{\ell \choose 2}$ double points, while a $(d,t,t',\ell)$ arrangement has ${d \choose 2} - t{\ell \choose 2} - t'{\ell-2 \choose 2}$ double points.

\begin{proof}[Proof of Theorem~\ref{t:configurations}]
Let $(\cP, \L)$ be a realisable $(n_k)$-configuration.
As noted above, $n \ge k^2-k+1$, so the inequalities are already true combinatorially for $k < 6$.
So we can suppose $k \ge 6$.

By perturbing the points of multiplicities different than $k$ and $2$ if necessary, we obtain a topological realisation of a line arrangement $\L'$ with $t_k(\L') = n$ and $t_2(\L') = {n \choose 2} - n {k \choose 2}$.

By applying Theorem~\ref{t:divisible} to a $(d,t,\ell)$-arrangement, we obtain
\[
(\ell-4)t \le d + (d^2-d) - d(\ell^2-\ell),
\]
which is equivalent to
\begin{equation}\label{e:dtl}
d^2 \ge (\ell^2-4)t.
\end{equation}
We now construct a $(d,t,\ell)$-arrangement starting from $\L'$, in three of the four cases.

\begin{itemize}
\item If $n$ and $k$ are both even, the arrangement $\L'$ defined above is an $(n,n,k)$-arrangement, and~\eqref{e:dtl} simplifies to $n\ge k^2-4$, as required.

\item Suppose $n$ is odd and $k$ is even.
As on average a line in the arrangement contains $k$ points of multiplicity $k$, we can find a line in $\L'$ that contains at least $k$ points\footnote{Any line would work if we worked with a configuration.}.
Remove this line from the arrangement. 
All intersection points that were on this line either disappeared (because they were double points) or have now multiplicity $k-1$, which is odd. 
Perturb each of the latter points to obtain an $(n-1,t,t',k)$-arrangement with $t + t' = n$ and $t \le n-k$.
Applying Theorem~\ref{t:divisible} and $t \le n-k$ we obtain:
\[
(k-4)t + (k-6)t' \le n-1 + 2{d \choose 2} - 2t{k \choose 2} - 2t'{k-2 \choose 2}.
\]
Since $t \ge n-k$,
\[
(k-4)(n-k) + (k-6)k \le (k-4)t + (k-6)t'
\]
and
\[
2{d \choose 2} - 2t{k \choose 2} - 2t'{k-2 \choose 2} \le 2{n \choose 2} - 2(n-k){k \choose 2} - 2k{k-2 \choose 2},
\]
so that
\[
(k-4)(n-k) + (k-6)k \le n-1 + 2{n \choose 2} - 2(n-k){k \choose 2} - 2k{k-2 \choose 2}.
\]
From this one obtains:
\[
n^2 - (k^2-2)n + 4k^2-4k+1 \le 0,
\]
and solving the degree-2 inequality in $n$ yields
\[
n \ge \frac{k^2-2 + \sqrt{k^4-20k^2 + 16k}}2 > \frac{k^2-2 + k^2-12}2 = k^2-7,
\]
where we used $k \ge 6$ to say that $\sqrt{k^4-20k^2 + 16k} > k^2-12$.
Since $n$ is odd and $k$ is even, $n > k^2-7$ implies $n \ge k^2-5$.
(Note that the other half-line of solutions of the quadratic inequality is not relevant, since the other solution of the quadratic equation is smaller than $1$.)

\item If $n$ is even and $k$ is odd, we remove $k-1$ concurrent lines from $\L'$ and we perturb all remaining points of multiplicity $k$ into a point of multiplicity $k-1$ and $k-1$ points of multiplicity 2. The resulting arrangement is an $(n-k+1,n-1,k-1)$-arrangement, since we lowered the degree by $k-1$, we removed a single point of multiplicity $k$, and turned all others into points of multiplicity $k-1$ (either by removing a line through them or by perturbing). Equation~\eqref{e:dtl} now gives:
\[
(n-k+1)^2 \ge (k^2-2k-3)(n-1)  \Longleftrightarrow  n^2 - (k^2-5)n + 2k^2-4k-2.
\]
Solving for $n$, we get:
\[
n \ge \frac{k^2 - 5 + \sqrt{k^4-18k^2 + 16k + 33}}2 > \frac{k^2 - 5 + k^2- 9}2 = k^2-7.
\]
Here we used $k \ge 5$ to say that $16k+33 > 81$, which in turn implies $\sqrt{k^4-18k^2 + 16k + 33} > k^2-9$.
(Similarly to the previous case, the other solution of the quadratic equation is smaller than $4$, so not geometrically relevant.)
Since $n$ and $k$ have opposite parities, we actually obtain $n \ge k^2-5$.

\item If $n$ and $k$ are both odd, we remove all lines passing through a point of multiplicity $k$ and perturb all remaining points of multiplicity $k$ as above, to get an $(n-k,n-1,k-1)$-arrangement.
Equation~\eqref{e:dtl} yields:
\[
(n-k)^2 \ge (n-1)(k^2-2k-3) \Longleftrightarrow n^2 - (k^2-3)n + 2k^2-2k-3.
\]
We solve for $n$ and obtain:
\[
n \ge \frac{k^2 - 3 + \sqrt{k^4-14k^2 + 8k + 21}}2 > \frac{k^2 - 5 + k^2- 7}2 = k^2-6,
\]
where we used $k \ge 5$ to say that $8k+21 > 49$, and so $\sqrt{k^4-14k^2 + 8k + 21} > k^2-7$.
(As above, the other solution of the quadratic equation is smaller than 4, so not geometrically relevant.)
Since $n$ and $k$ have the same parity, we have $n \ge k^2-4$.
\end{itemize}
This concludes the proof.
\end{proof}

A special case of configurations is given by finite projective planes.

\begin{defn}
A \emph{finite projective plane} is a pair $(\mathcal{P}, \L)$ where $\mathcal{P}$ is a finite set (whose elements are called \emph{points}) and $\mathcal{L}$ is a collection of subsets of $\mathcal{P}$ (called \emph{lines}), such that:
\begin{itemize}
\item any two lines intersect in one point;
\item for every two points there exists a unique line containing them;
\item there exist four points that are not in the same line.
\end{itemize}
Two finite projective planes $(\mathcal{P}, \L)$ and $(\mathcal{P}', \L')$ are \emph{isomorphic} if there exists two bijections $\pi \co \mathcal{P}_1 \to \mathcal{P}_2$ and $\lambda\co \L_1 \to \L_2$ such that $\lambda(L) \cap \lambda(L') = \pi(L \cap L')$ for every $L, L' \in \L$.
\end{defn}

It is known that if $(\mathcal{P}, \L)$ is a finite projective plane, then $\mathcal{P}$ and $\L$ both have cardinality $q^2+q+1$ for some integer $q$, that each line contains $q+1$ points and that, dually, every point belongs to $q+1$ lines.
The integer $q$ is called the \emph{order} of the finite projective plane.
In fact, a pair $(\mathcal{P}, \L)$ is a finite projective plane of order $q$ if and only if it is an $((q^2+q+1)_{q+1})$-configuration.

\begin{ex}
Let $q$ be a prime power and identify $\F_q\P^2$, the projective plane over the finite field of order $q$. Then the lines of $\F_q\P^2$, form a finite projective plane of order $q$.
It is known that there exist finite projective planes of order $9$ that are not isomorphic to $\F_9\P^2$~\cite{Hall-Swift-Killgrove:1959}.
It is not known whether there exist finite projective planes of order $q$ for some $q$ which is not a prime power.
\end{ex}

\begin{thm}\label{t:nofinite}
No finite projective plane can be topologically realised in $\CP$.
\end{thm}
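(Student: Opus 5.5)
A finite projective plane of order $q$ is, as recalled above, an $((q^2+q+1)_{q+1})$-configuration. So the plan is to apply Theorem~\ref{t:configurations} with $n = q^2+q+1$ and $k = q+1$, and check that the resulting inequality fails for every $q \ge 2$. Small orders are handled separately, using the combinatorial bound or earlier results.

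First, the parities. The integer $n = q^2+q+1 = q(q+1)+1$ is always odd. Its relation to $k^2$ is
\[
n = k^2 - k + 1 = k^2 - q,
\]
so the comparison with Theorem~\ref{t:configurations} reduces to comparing $q$ with the constants $4$ and $5$.

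If $q$ is odd, then $k = q+1$ is even and $n$ is odd. The relevant case of Theorem~\ref{t:configurations} gives $n \ge k^2-5$, that is, $q \le 5$.

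If $q$ is even, then $k$ is odd and $n$ is odd. The relevant case gives $n \ge k^2-4$, that is, $q \le 4$.

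Hence no finite projective plane of order $q \ge 6$ is topologically realisable. The remaining orders to exclude are $q \in \{2,3,4,5\}$.

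The main obstacle is that the proof of Theorem~\ref{t:configurations} begins by assuming $k \ge 6$, dismissing smaller $k$ as combinatorially trivial. For finite projective planes, however, the combinatorial bound $n \ge k^2-k+1$ is an equality, so that dismissal does not apply. I would therefore re-examine the two relevant cases of that proof directly for small $k$.

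For $q$ odd ($k$ even, $n$ odd), the argument deletes a line and applies Theorem~\ref{t:divisible}, obtaining $n^2-(k^2-2)n+4k^2-4k+1 \le 0$. I will evaluate this at the exact values rather than through the approximation $\sqrt{\cdot} > k^2-12$:
\begin{itemize}
\item $q=3$ ($n=13$, $k=4$): $169-182+49 = 36 > 0$, a contradiction;
\item $q=5$ ($n=31$, $k=6$): $961-1054+121 = 28 > 0$, a contradiction.
\end{itemize}

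For $q$ even ($k$ odd, $n$ odd), I will use $(n-k)^2 \ge (n-1)(k^2-2k-3)$ from the $(n-k,n-1,k-1)$-arrangement:
\begin{itemize}
\item $q=4$ ($n=21$, $k=5$): $256 \ge 240$ holds, so this inequality does not suffice;
\item $q=2$ ($n=7$, $k=3$): $16 \ge 0$ holds, so this inequality does not suffice either.
\end{itemize}

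For these two orders I would fall back on Ruberman--Starkston~\cite{Ruberman-Starkston:2016}. Orders $2$ and $4$ are prime powers, and projective planes of orders $2,3,4,5$ are unique up to isomorphism, namely $\F_q\P^2$, so their result excludes them. Alternatively, for $q=4$ one could try Corollary~\ref{c:smalldivisibility}-type arguments or Theorem~\ref{t:divisible} directly on the $(21_5)$ arrangement after a different deletion. If that fails, citing uniqueness together with \cite{Ruberman-Starkston:2016} closes the small cases and completes the proof.
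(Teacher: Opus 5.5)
Your overall architecture is that of the paper's first proof: Theorem~\ref{t:configurations} handles large orders, and \cite{Ruberman-Starkston:2016} plus uniqueness of the planes of orders $2,\dots,5$ handles the rest. Your parity bookkeeping is slightly sharper than the paper's. Since $n=q^2+q+1$ is always odd, for even $q$ the case ``$n$, $k$ both odd'' gives $n\ge k^2-4$, i.e.\ $q\le 4$. So $q=6$ is already excluded by Theorem~\ref{t:configurations}, and you do not need the non-existence of planes of order $6$ that the paper invokes.

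The genuine error is your treatment of $q=3$ and $q=5$. Delete a line to get the $(n-1,n-k,k,k)$-arrangement and apply Theorem~\ref{t:divisible}. This gives $(n-k)(k^2-4)+k(k^2-4k)\le (n-1)^2$, that is, $n^2-(k^2-2)n+4k^2-4k+1\ge 0$. The ``$\le 0$'' displayed in the proof of Theorem~\ref{t:configurations} is a slip, as the conclusion drawn there ($n$ at least the larger root) shows.

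The values $36$ and $28$ you computed are exactly the slack $(n-1)^2-\bigl[(n-k)(k^2-4)+k(k^2-4k)\bigr]$. So the inequality \emph{holds} and there is no contradiction. The shape of the polynomial confirms this. For $k=4$ it is $(n-7)^2$, which is never negative. For $k=6$, the value $n=31=k^2-5$ is exactly the borderline case that Theorem~\ref{t:configurations} permits, as your own parity step (``that is, $q\le 5$'') already showed.

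The alternative you float for $q=4$ also fails. Deleting the five lines through a point leaves a $4$-divisible arrangement of $16$ lines with $20$ quadruple points. For it, Theorem~\ref{t:divisible} reads $(4-\frac{24}{5})\cdot 20=-16\le 16$, and Corollary~\ref{c:smalldivisibility} needs $q\ge 7$.

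So the paper's inequalities say nothing about orders $2,3,4,5$. You must apply \cite{Ruberman-Starkston:2016} to all four of them, together with the fact that $\F_q\P^2$ is the only projective plane of order $q$ for $q\le 5$, exactly as the paper does. With that replacement your argument is complete.
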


We give two proofs.

\begin{proof}[First proof]
Let $(\cP,\L)$ be a finite projective plane, and let $q$ be its order.
Then $(\cP,\L)$ is an $((k^2-k+1)_k)$-configuration, where $k = q+1$.
If $q = k-1 > 6$, such a configuration is not realisable by Theorem~\ref{t:configurations}, so we only need to discuss the cases $q = 2,\dots, 6$.

There are no finite projective planes of order 6~\cite{Lam:1991}\footnote{Even if one existed, it would not be realisable by~\cite[Theorem~3]{Golla:2024}, as it would have degree 43 and 43 points of multiplicity 7, but $43\cdot(7-1) \neq 43-1 \pmod{16}$. It would also contradict Theorem~\ref{t:odd} above.}.
For $q = 2,\dots, 5$, there is a unique finite projective plane, namely $\mathbb{F}_q\P^2$ ($q$ is always a prime power in this range).
These are not realised by earlier work of~\cite{Ruberman-Starkston:2016}.
\end{proof}

\begin{proof}[Second proof]
Let $(\cP,\L)$ be a finite projective plane, and let $q$ be its order.
As in the proof above, if $q \le 6$ there is no topological realisation by~\cite{Ruberman-Starkston:2016} and~\cite{Lam:1991}.

Suppose then that $q \ge 6$. Choose a point $x \in \cP$ and remove all the lines through $x$.
The resulting arrangement $\L_x$ is $q$-divisible, since it contains $q^2$ lines and all its points have multiplicity $q$.
So, by Corollary~\ref{c:smalldivisibility}, $\L_x$ is not realisable, and therefore neither is $\L$.
\end{proof}

In fact, something similar is true for line arrangements whose intersection points have the same multiplicity.

\begin{thm}
Let $m > 7$.
There is no topologically realisable line arrangement in $\CP$ with all singular points of multiplicity $m$, except for the pencil of degree $m$.
\end{thm}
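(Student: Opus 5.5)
The plan is to reduce to the even-arrangement inequality of Theorem~\ref{t:divisible} with $q=2$, and to use Lemma~\ref{l:largef0} to supply the lower bound $t_m = f_0 \ge d$. Let $\L$ be a non-pencil realisation of degree $d$ with all singular points of multiplicity $m$, so $f_0 = t_m \ge d$. The combinatorics is rigid. Every line contains the same number $r = (d-1)/(m-1)$ of singular points. Double counting gives $t_m\binom m2 = \binom d2$, that is, $t_m = dr/m$. I would split according to the parities of $m$ and $d$.

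\emph{Case $m$ even, $d$ even.} Here $\L$ is an even arrangement with $t_2 = 0$. Theorem~\ref{t:divisible} gives $(m-4)t_m \le d$. Since $t_m \ge d$ and $m-4 > 1$, this is a contradiction.

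\emph{Case $m$ even, $d$ odd.} I would remove one line $L$. The $r$ singular points on $L$ drop to multiplicity $m-1$. I would then perturb each of them into a point of multiplicity $m-2$ together with $m-2$ double points, by pushing one line off. The result is an even arrangement of degree $d-1$ with:
\begin{itemize}
\item $t_m-r$ points of multiplicity $m$;
\item $r$ points of multiplicity $m-2$;
\item $r(m-2)$ double points.
\end{itemize}
Theorem~\ref{t:divisible} then gives
\[
(m-4)(t_m-r)+(m-6)r-2r(m-2)\le d-1 .
\]
The left-hand side equals $(m-4)t_m-2r(m-1)$. Using $t_m \ge d$ and $r(m-1)=d-1$, it is at least $(m-6)d+2$. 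This exceeds $d-1$ once $m\ge 7$, which is a contradiction.

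\emph{Case $m$ odd.} Now $m-1$ is even and $d-m=(r-1)(m-1)$ is even. The natural move, as in the second proof of Theorem~\ref{t:nofinite}, is to choose a singular point $x$ and delete the $m$ lines through it. This leaves $d-m$ lines. The points lying on a deleted line now have multiplicity $m-1$, which is even. Each such point lies on exactly one deleted line, and there are $m(r-1)$ of them. The remaining $t_m-1-m(r-1)$ points still have odd multiplicity $m$. Perturbing each of these into an $(m-1)$-point plus $m-1$ double points produces an even arrangement. However, a quick estimate shows that the resulting $-2t_2$ term, of order $2(m-1)t_m \approx 2dr$, swamps the left side of Theorem~\ref{t:divisible}.

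This odd case is the main obstacle, so I would pursue three remedies in order.
\begin{enumerate}
\item Choose the deletion more cleverly so that fewer odd points survive, for instance by also deleting lines through a second point. The goal is an even arrangement whose double points number only $O(d)$.
\item Apply Lemma~\ref{l:divisiblecover} directly, with a $q$-fold cover for $q$ a prime power dividing $m-1$, to the arrangement obtained after deleting the pencil at $x$. One would treat the surviving $m$-points via the cover's $b_2^-$ count rather than by perturbation.
\item If a smooth realisation is allowed, Theorem~\ref{t:odd} already settles the case. Indeed $(m-9)t_m+1+7d>0$ for $m\ge 9$, which handles all odd $m>7$.
\end{enumerate}
So in the odd case I expect the topological argument to require the most care.
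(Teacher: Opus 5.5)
Your treatment of even $m$ is correct and follows the paper's route. For $d$ even it is Theorem~\ref{thm:evenbound}, i.e.\ $(m-4)t_m\le d$ against $t_m\ge d$. For $d$ odd the paper likewise deletes one line, perturbs the $r=\frac{d-1}{m-1}$ resulting points of multiplicity $m-1$, and applies Theorem~\ref{t:divisible}. Your use of $r(m-1)=d-1$ reduces everything to $(m-4)t_m\le 3(d-1)$, which contradicts $t_m\ge d$ directly. This is tidier than the paper's detour through an explicit upper bound $d\le\frac{3m(m-1)}{m-4}$ compared with a lower bound on $d$.

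The gap is the case $m$ odd, where you offer only candidate strategies and prove nothing. Your remedy (3) is exactly the paper's argument (``if $m$ is odd, this follows directly from Theorem~\ref{t:odd}''). However, Theorem~\ref{t:odd} assumes a \emph{smooth} realisation: it rests on Furuta's 10/8-Theorem (and the Ozsv\'ath--Szab\'o inequality), and 10/8 fails for topological spin manifolds such as Freedman's $E_8$-manifold. So for odd $m$ the paper's proof, like yours, only rules out smooth realisations, and the topological statement needs an ingredient that is not in the paper.

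Your diagnosis of the pencil-deletion trick is accurate and can be made sharp. With $r=\frac{d-1}{m-1}$, deleting the pencil at $x$ leaves $T=\frac{(m-1)(r-1)(r-m)}{m}$ points of odd multiplicity $m$. After perturbing them, Theorem~\ref{t:divisible} becomes $(m-5)(t_m-1)-2(m-1)T\le (m-1)(r-1)$. This yields a contradiction only when $(m-1)(m+3)\,r<2m\bigl((m-1)^2-2\bigr)$, roughly $r<2m-8$. That covers projective planes ($r=m$, cf.\ Theorem~\ref{t:nofinite}) and duals of affine planes ($r=m+1$), but not duals of Steiner systems $S(2,m,d)$ with $d$ large.

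Your other two remedies do not close this either. Remedy (1) with boundedly many extra pencils changes the parity of only $O(mr)$ points, while $T$ grows like $r^2$. Remedy (2) does not apply as stated: $q\mid m-1$ forces $q\nmid m$, so the surviving $m$-points make the branch class non-divisible by $q$, and Lemma~\ref{l:divisiblecover} yields no cover.
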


\begin{proof}
If $m$ is odd, this follows directly from Theorem~\ref{t:odd}.

Suppose now that $m$ is even and call $d$ the degree of such a line arrangement.
If $d$ is even, the arrangement is even, and by Theorem~\ref{thm:evenbound} it cannot be topologically realised.

Suppose now that $d$ is odd.
On average, lines of the arrangement contain $\frac{d-1}{m-1}$ singular points.
Choose a line that contains at least $t'$ singular points, with $t' \ge \frac{d-1}{m-1}$, and remove it from the arrangement.
Perturb each of the $t'$ points of intersection that used to lie on this line to get $t'$ points of multiplicity $m-2$ and $t'(m-1)$ points of multiplicity $2$.
This yields a $(d-1,t,t',m)$-arrangement, where $t+t' = \frac{d^2-d}{m^2-m}$ and $t' \ge \frac{d-1}{m-1}$.
Applying Theorem~\ref{t:divisible}, and using $t' \ge \frac{d-1}{m-1}$ as in the proof of Theorem~\ref{t:configurations}, we get
\[
(m^2-4)\left(\frac{d^2-d}{m^2-m}-\frac{d-1}{m-1}\right) + (m^2-4m)\frac{d-1}{m-1} \le (d-1)^2.
\]
Divide by $d-1$ and multiply by $m^2-m$, we get
\[
d \le  \frac{3m(m-1)}{m - 4} = 3m+3 + \frac{36}{m-4}.
\]
On the other hand, if the arrangement is not a pencil, then it contains at least $(m-1)^2 + 2 = m^2-2m+3$ intersection points.
This is because we can choose two points.
For each of them we can find at least $(m-1)$ lines not containing the other, which accounts for $(m-1)^2$ intersection points.

Now,
\[
m^2-2m+3 \le d \le 3m+3 + \frac{36}{m-4} \Rightarrow m(m-5)(m-4) \le 36,
\]
which implies $m < 7$, as claimed.
\end{proof}

\bibliographystyle{amsalpha}
\let\MRhref\undefined
\bibliography{bib}

\end{document}